
\documentclass{mathincs}

\usepackage{graphicx}
\usepackage{subcaption}
\usepackage{amsmath}
\usepackage{amssymb}
\usepackage{arydshln}
\usepackage{xfrac}

%
%
 \newtheorem{theorem}{Theorem}[section]
 
 \newtheorem{lemma}[theorem]{Lemma}
 \newtheorem{proposition}[theorem]{Proposition}
 \theoremstyle{definition}
 \newtheorem{definition}[theorem]{Definition}
 \theoremstyle{remark}
 \newtheorem{remark}[theorem]{Remark}
 \newtheorem{conjecture}[theorem]{Conjecture}
 
 \numberwithin{equation}{section}

\begin{document}
\renewcommand{\arraystretch}{1.15} 
%
%
%
\title[Minimal height companion matrices for Euclid polynomials]
 {Minimal height companion matrices \\for Euclid polynomials}
\author{Eunice Y.~S.~Chan}

\address{%
Ontario Research Center for Computer Algebra and School of Mathematical and Statistical Sciences, Western University}

\email{echan295@uwo.ca}

\author{Robert M.~Corless}
\address{Ontario Research Center for Computer Algebra and School of Mathematical and Statistical Sciences, Western University}
\email{rcorless@uwo.ca}
\subjclass{11C20, 15A22, 65F15, 65F35}

\keywords{Bohemian eigenvalues, minimal height, companion matrix, conditioning, Euclid numbers}

\date{\today}
\dedicatory{Dedicated to Jonathan M.~Borwein}

\begin{abstract}
We define Euclid polynomials $E_{k+1}(\lambda) = E_{k}(\lambda)\left(E_{k}(\lambda) - 1\right) + 1$ and $E_{1}(\lambda) = \lambda + 1$ in analogy to Euclid numbers $e_k = E_{k}(1)$. We show how to construct companion matrices $\mathbb{E}_k$, so $E_k(\lambda) = \operatorname{det}\left(\lambda\mathbf{I} - \mathbb{E}_{k}\right)$, of height 1 (and thus of minimal height over all integer companion matrices for $E_{k}(\lambda)$). We prove various properties of these objects, and give experimental confirmation of some unproved properties.
\end{abstract}

\maketitle
\section{Introduction}
The sequence $e_n = 2, 3, 7, 43, 1807, \ldots$ defined by $e_1 = 2$ and the recurrence relation
\begin{equation}
	e_{n+1} = e_{n}e_{n-1} \cdots e_2e_1 + 1 
\end{equation}
for $n \geq 1$, is known under various names: Euclid numbers, Sylvester's sequence, or Ahmes numbers. The sequence can be found at The Online Encyclopedia of Integer Sequences as entry A000058. There, we find references to work of Erd{\"o}s, Shparlinsky, Vardi, Sloane, Guy, and other well-known number theorists and analysts.

These numbers, which we will call Euclid numbers, as they are called in \cite[chapter 4]{grahamconcrete}, have interesting properties. For instance, they are mutually relatively prime. Quoting \cite{grahamconcrete},
\begin{quote}
	``Euclid's algorithm (what else?) tells us this in three short steps, because $e_n\operatorname{mod}e_{m} = 1$ when $n > m$: $\operatorname{gcd}(e_n, e_m) = \operatorname{gcd}(1, e_m) = \operatorname{gcd}(1, 0) = 1$."
\end{quote}
Euclid numbers grow \textsl{doubly exponentially}; indeed exercise 37, chapter 4 of \cite{grahamconcrete} asks the reader to prove\footnote{The hint there is to write $e_{n+1}-\sfrac{1}{2} = \left(e_n - \sfrac{1}{2}\right)^{2} + \sfrac{1}{4}$ and consider $2^{-n}\log\left(e_{n} - \sfrac{1}{2}\right)$.} that
\begin{equation}
	e_n = \left\lfloor E^{2^{n}} + \dfrac{1}{2}\right\rfloor
\end{equation}
for a number $E \approx 1.264$; here $\lfloor x \rfloor$ is the floor of $x$, the largest integer not greater than $x$.

The name ``Ahmes numbers" comes from a connection to so-called \textsl{Egyptian fractions}\footnote{Quoting Exercise 9, p.~95 from \cite{grahamconcrete}, ``Egyptian mathematicians in 1800 BC represented rational numbers between 0 and 1 as sums of unit fractions $\sfrac{1}{x_{1}} + \cdots + \sfrac{1}{x_k}$ where the $x_k$ were distinct positive integers."}. Quoting N{\'e}stor Romeral Andr{\'e}s from the A000058 entry,
\begin{quote}
	``The greedy Egyptian representation of 1 is $1 = \sfrac{1}{2} + \sfrac{1}{3} + \sfrac{1}{7} + \sfrac{1}{43} + \sfrac{1}{1807} + \cdots$"
\end{quote}
and he then goes on to give a geometric dissection of a unit square (in words) proving this assertion. Algebraically, we have the following.
\begin{lemma}\label{lemma:egypt}
\begin{equation}
	1 = \sum_{k = 1}^{n} \dfrac{1}{e_k} + \dfrac{1}{e_{n+1} - 1}
\end{equation}
because
\begin{align}
	e_{n + 1} &= e_{n}e_{n-1}\cdots e_1 + 1 \nonumber\\
	&=e_{n}\left(e_{n} - 1\right) + 1 \>. 
\end{align}
\end{lemma}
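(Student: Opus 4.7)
The plan is to prove the identity by induction on $n$, using the algebraic rewriting $e_{n+1}-1 = e_n(e_n-1)$ that the statement itself highlights.

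For the base case $n=1$, I would simply evaluate: $1/e_1 + 1/(e_2-1) = 1/2 + 1/2 = 1$, using $e_1=2$ and $e_2 = 3$.

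For the inductive step, assuming $1 = \sum_{k=1}^{n} 1/e_k + 1/(e_{n+1}-1)$, the goal reduces (by subtracting the two identities) to the single telescoping identity
\begin{equation}
\frac{1}{e_{n+1}-1} \;=\; \frac{1}{e_{n+1}} + \frac{1}{e_{n+2}-1}.
\end{equation}
Rearranging, this is equivalent to
\begin{equation}
\frac{1}{e_{n+1}-1} - \frac{1}{e_{n+1}} \;=\; \frac{1}{e_{n+1}(e_{n+1}-1)} \;=\; \frac{1}{e_{n+2}-1},
\end{equation}
which is exactly the recurrence $e_{n+2} = e_{n+1}(e_{n+1}-1)+1$ stated in the lemma (itself an immediate consequence of $e_{n+1}-1 = e_n e_{n-1}\cdots e_1$ read off the defining recursion).

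There is no real obstacle here; the only subtle point is recognizing that the ``greedy Egyptian'' identity is essentially the telescoping sum generated by the two-term recurrence $e_{m+1}-1 = e_m(e_m-1)$, which is why the key algebraic step is flagged in the statement itself. One could alternatively present the proof as a direct telescoping: from $1/(e_m-1) - 1/(e_{m+1}-1) = 1/e_m$ for all $m \ge 1$, summing from $m=1$ to $n$ gives $1/(e_1-1) - 1/(e_{n+1}-1) = \sum_{k=1}^{n} 1/e_k$, and since $e_1 - 1 = 1$ this rearranges to the claimed identity.
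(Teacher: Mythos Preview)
Your proof is correct and essentially identical to the paper's: both are inductions on $n$ with the same base case $1/2+1/(3-1)=1$, and the inductive step in each case boils down to the computation $\dfrac{1}{e_{n+1}-1}-\dfrac{1}{e_{n+1}}=\dfrac{1}{e_{n+1}(e_{n+1}-1)}=\dfrac{1}{e_{n+2}-1}$. Your closing telescoping remark is a nice alternative packaging of the same identity.
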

\begin{proof}
An easy induction: clearly $1 = \sfrac{1}{2} + \sfrac{1}{2} = \sfrac{1}{2} + \sfrac{1}{(3-1)}$ so the statement is true for $n = 1$. Then
\begin{align}
	1 &= \sum_{k=1}^{n} \dfrac{1}{e_k} + \dfrac{1}{e_{n+1} - 1} \nonumber \\
	&= \sum_{k=1}^{n+1} \dfrac{1}{e_k} + \dfrac{1}{e_{n+1} - 1} - \dfrac{1}{e_{n+1}} \nonumber \\
	&= \sum_{k=1}^{n+1} \dfrac{1}{e_k} + \dfrac{e_{n+1} - e_{n+1} + 1}{e_{n+1}\left(e_{n+1} - 1\right)} \nonumber \\
	&= \sum_{k=1}^{n+1} \dfrac{1}{e_k} + \dfrac{1}{e_{n+2} - 1} \>.
\end{align}
\end{proof}

There are other properties too, but we hope that this is enough to whet your appetite because we want to move on to what we call\footnote{The polynomials $E_{k}(-\lambda)$ occur, not with this name, as sequence A225200 by Martin Renner.} ``Euclid polynomials." Put
\begin{equation}
	E_{1}(\lambda) = \lambda + 1
\end{equation}
and
\begin{equation}
	E_{n+1}(\lambda) = \lambda E_{n}(\lambda) E_{n-1}(\lambda)\cdots E_{1}(\lambda) + 1
\end{equation}
for $n \geq 1$. Then, obviously, $E_{k}(0) = 1$ for $k \geq 1$ and $E_{k}(1) = e_k$ for $k \geq 1$. Possibly these polynomials in the variable $\lambda$ can shed some light on Euclid numbers. One could make $E_0(\lambda) = 1$ but this complicates later formulae to no purpose. The first few Euclid polynomials are
\begin{align}
	E_{1} &= \lambda + 1 \nonumber \\
	E_{2} &= \lambda^2 + \lambda + 1 \nonumber \\
	E_{3} &= \lambda^4 + 2\lambda^3 + 2\lambda^2 + \lambda + 1 \nonumber \\
	E_{4} &= \lambda^8 + 4\lambda^7 + 8\lambda^6 + 10\lambda^5 + 9\lambda^4 + 6\lambda^3 + 3\lambda^2 + \lambda + 1 \>.
\end{align}
We will enumerate and prove some properties of these polynomials in the next section, but first we confess: we're not interested in Euclid polynomials because of their connection to Euclid numbers. We are interested because we have a new technique for finding their roots, namely by finding an equivalent eigenvalue problem (a so-called ``companion matrix") that has a vary interesting property of its own, namely that out of all integer matrices $\mathbf{A}_k$ having
\begin{equation}
	E_k(\lambda) = \operatorname{det}\left(\lambda\mathbf{I} - \mathbf{A}_k\right)
\end{equation}
the height of $\mathbf{A}_k$---that is, the absolute value of the largest entry of $\mathbf{A}_k$---is the \textsl{least} when we use our method.
\begin{remark}
	$\operatorname{Height}(\mathbf{A}) = \|\operatorname{vec}(\mathbf{A})\|_{\infty}$ is actually a matrix norm. It is not, however, submultiplicative:
	\begin{equation}
		\mathbb{H}(\mathbf{AB}) \not\leq \mathbb{H}(\mathbf{A})\mathbb{H}(\mathbf{B}) \>.
	\end{equation}
	For example, consider
	\begin{equation}
		\left[
			\begin{array}{cc}
				2 & 2 \\
				2 & 2
			\end{array}
		\right] 
		=
		\left[
			\begin{array}{cc}
				1 & 1 \\
				1 & 1
			\end{array}
		\right]
		\left[
			\begin{array}{cc}
				1 & 1 \\
				1 & 1
			\end{array}
		\right]
		\>.
	\end{equation}
\end{remark}

We will find companion matrices for $E_k(\lambda)$ of height 1, as small as possible for any integer matrix. This is to be contrasted with the size of the largest polynomial coefficient of $E_k(\lambda)$, which since
\begin{equation}
	E_k(1) = \sum_{j=0}^{2^{k-1}}E_{j, k} = \left\lfloor E^{2^{k}} + \dfrac{1}{2}\right\rfloor
\end{equation}
must at least be
\begin{equation}
	\dfrac{1}{2^{k-1} + 1}\left\lfloor E^{2^{k}} + \dfrac{1}{2}\right\rfloor = \mathcal{O}\left(E^{2^{k} - \mathcal{O}(k)}\right)
\end{equation}
(the maximum cannot be smaller than the average). Here, we are denoting the coefficients of
\begin{equation}
	E_k(\lambda) = \sum_{j=0}^{\operatorname{deg}E_k} E_{j, k} \lambda^{j}
\end{equation}
by $E_{j, k}$ and claiming $\operatorname{deg}E_{k}(\lambda) = 2^{k - 1}$, which we will prove in the next section. This massive reduction in height has important numerical consequences. The eigenvalues of this ``minimal height companion matrix" will be much easier to compute than are the roots of the explicit polynomial (with its doubly-exponentially large coefficients).

This minimal height companion matrix would itself just be a curiosity, except that the technique we use to generate it turns out to be quite general, and in fact can be extended to \textsl{matrix} polynomials, giving so-called \textsl{lower-height linearizations}\footnote{\textsl{Minimal} height linearizations are an open question.}. Euclid polynomials have a special place in our hearts, though, because it was by finding their minimal height companion matrices that we realized the technique was, in fact, general.

\section{Properties of Euclid Polynomials}
\begin{proposition}
	$\operatorname{deg}E_{k}(\lambda) = 2^{k-1}$.
\end{proposition}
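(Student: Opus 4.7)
The plan is to use induction on $k$, leveraging the simplified recurrence
\[
 E_{n+1}(\lambda) = E_n(\lambda)\bigl(E_n(\lambda) - 1\bigr) + 1,
\]
which is the polynomial analogue of the identity used in Lemma \ref{lemma:egypt}. The base case $\deg E_1 = 1 = 2^{0}$ is immediate from $E_1(\lambda) = \lambda + 1$.

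First I would verify the simplified recurrence. From the defining recurrence $E_{n}(\lambda) = \lambda E_{n-1}(\lambda) \cdots E_{1}(\lambda) + 1$ we read off
\[
 E_{n}(\lambda) - 1 = \lambda E_{n-1}(\lambda)\cdots E_{1}(\lambda),
\]
so that
\[
 E_{n+1}(\lambda) = \lambda E_{n}(\lambda) E_{n-1}(\lambda) \cdots E_{1}(\lambda) + 1 = E_{n}(\lambda)\bigl(E_{n}(\lambda) - 1\bigr) + 1.
\]
This is the polynomial counterpart of the factorization already used in the paper for the integer Euclid numbers.

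Given this, the induction is immediate. Assuming $\deg E_n(\lambda) = 2^{n-1}$, the polynomial $E_n(\lambda) - 1$ has the same degree $2^{n-1}$ (subtracting the constant term $1$ does not affect the leading term), so
\[
 \deg E_{n+1}(\lambda) = \deg\bigl(E_{n}(\lambda)(E_{n}(\lambda) - 1)\bigr) = 2 \cdot 2^{n-1} = 2^{n},
\]
because the constant $+1$ is absorbed into lower-order terms. This closes the induction and yields $\deg E_{k}(\lambda) = 2^{k-1}$ for all $k \geq 1$.

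There is no real obstacle here; the only subtlety is to remember to establish the convenient two-term recurrence from the multi-term one before inducting. Alternatively, one could induct directly on the original recurrence by observing that
\[
 \deg E_{n+1}(\lambda) = 1 + \sum_{j=1}^{n} \deg E_{j}(\lambda) = 1 + \sum_{j=1}^{n} 2^{j-1} = 1 + (2^{n} - 1) = 2^{n},
\]
which gives the same conclusion and avoids the preliminary step, but the factored form is worth recording since it will likely be useful elsewhere in the paper.
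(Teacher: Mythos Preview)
Your proof is correct and follows essentially the same route as the paper: both establish the two-term recurrence $E_{n+1}=E_n(E_n-1)+1$, check the base case $\deg E_1=1$, and then induct via $\deg E_{n+1}=2\deg E_n$. Your write-up is in fact a bit more careful in justifying why the degree does not drop, and your alternative summation argument on the original multi-term recurrence is a nice bonus that the paper does not include.
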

\begin{proof}
	$\operatorname{deg}E_{1}(\lambda) = \operatorname{deg}\lambda + 1 = 1 = 2^{1 - 1}$. Since
	\begin{align}
		E_{k+1}(\lambda) &= \lambda E_{k}(\lambda)E_{k-1}(\lambda) \cdots E_{1}(\lambda) + 1 \nonumber \\
		&= E_{k}(\lambda)\left(E_{k}(\lambda) - 1\right) + 1
	\end{align}
	for $k \geq 2$, and independently for $k = 1$ when
	\begin{align}
		E_2(\lambda) &= (1 + \lambda)\cdot \lambda + 1 \>, \nonumber \\
		\operatorname{deg} E_{k+1}(\lambda) &= 2\operatorname{deg}E_{k}(\lambda) \>.
	\end{align}
	If $\operatorname{deg}E_{k}(\lambda) = 2^{k-1}$, $\operatorname{deg}E_{k+1}(\lambda) = 2^{k + 1 - 1}$. This establishes the inductive step.
\end{proof}
\begin{proposition}
	If $E_{k}(\lambda) = \sum_{j = 0}^{2^{k-1}}E_{j, k}\lambda^{k}$, then all $E_{j, k}$ are positive integers,
	\begin{equation}
		E_{0, k} = E_{2^{k-1}, k} = 1 \>,
	\end{equation}
	and
	\begin{equation}
		e_k = E_{k}(1) = \sum_{j = 0}^{2^{k-1}}E_{j, k} \>.
	\end{equation}
\end{proposition}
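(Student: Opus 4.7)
The plan is to dispatch claim (3) immediately and then prove (1) and (2) together by induction on $k$, using the quadratic recurrence $E_{k+1}(\lambda) = E_k(\lambda)\bigl(E_k(\lambda) - 1\bigr) + 1$ recorded in the previous proposition. For (3), substituting $\lambda = 1$ into the expansion $E_k(\lambda) = \sum_j E_{j,k}\lambda^j$ and invoking the observation $E_k(1) = e_k$ from the introduction gives the result with no further work.

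For (1) and (2), the base case $k = 1$ is immediate from $E_1(\lambda) = \lambda + 1$. For the inductive step, rewrite the recurrence as $E_{k+1}(\lambda) = E_k(\lambda)^2 - E_k(\lambda) + 1$. The extreme coefficients are easy: the constant term is $E_{0,k}^2 - E_{0,k} + 1 = 1 - 1 + 1 = 1$, while the leading coefficient is $E_{2^{k-1},k}^2 = 1$, sitting in position $2^k = 2^{(k+1)-1}$, consistent with the degree count.

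The one genuine obstacle is positivity of the intermediate coefficients: since $E_k$ is subtracted from $E_k^2$, a priori some middle coefficient could vanish or become negative. The saving observation is that $E_{0,k} = 1$. For $1 \le j \le 2^{k-1}$, the convolution $[\lambda^j]E_k(\lambda)^2 = \sum_{i+\ell = j} E_{i,k} E_{\ell,k}$ contains the two distinct terms $(i,\ell) = (0,j)$ and $(j,0)$, each equal to $E_{j,k}$, with every other summand nonnegative by the inductive hypothesis; hence $[\lambda^j]\bigl(E_k^2 - E_k\bigr) \ge 2 E_{j,k} - E_{j,k} = E_{j,k} \ge 1$. For $2^{k-1} < j \le 2^k$ one has $[\lambda^j] E_k = 0$, and the convolution still contains the term $E_{j-2^{k-1},k}\cdot E_{2^{k-1},k} = E_{j-2^{k-1},k} \ge 1$. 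The final $+1$ affects only the constant term, which has already been handled, so every $E_{j,k+1}$ is a positive integer and the induction closes.
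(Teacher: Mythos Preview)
Your proof is correct and follows essentially the same inductive strategy as the paper: both use the quadratic recurrence and the Cauchy product to propagate positivity and the boundary values $1$. The only organizational difference is that the paper writes the product as $E_k\cdot(E_k-1)$, so that the second factor already has nonnegative coefficients (the constant term vanishes) and the convolution is manifestly a sum of nonnegative terms; you instead expand $E_k^2$ and then subtract $E_k$, which forces you to isolate the two boundary terms $(0,j)$ and $(j,0)$ in the convolution to absorb the subtraction. Your route is slightly longer but arguably more explicit about why positivity survives---the paper's one-line ``sum of products of positive integers'' glosses over checking that the sum is nonempty in each degree, which you verify case by case.
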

\begin{proof}
	\begin{align}
		E_{k+1}(\lambda) &= E_{k}(\lambda)\left(E_{k}(\lambda) - 1\right) + 1 \nonumber \\
		&= \lambda E_{k}(\lambda)E_{k-1}(\lambda) \cdots E_{1}(\lambda) + 1
	\end{align}
	has trailing coefficient 1 (set $\lambda = 0$) and leading coefficient 1 (the square of the leading coefficient of $E_{k}(\lambda)$). As for $E_{j, k} \geq 1$ being integral, the Cauchy product formula gives
	\begin{equation}
		\left[ z^j \right] E_{k+1}(\lambda) = E_{j, k+1}
	\end{equation}
	(the coefficient of $z^j$ of $E_{k+1}$)
	\begin{equation}
		= \sum_{\ell = 0}^{j} E_{\ell, k}\hat{E}_{j - \ell, k}
	\end{equation}
	where
	\begin{equation}
		\hat{E}_{j - \ell, k} = 
		\begin{cases}
			E_{j - \ell, k} & \text{if } \ell < j \\
			0 & \text{if } \ell = j
		\end{cases}
	\end{equation}
	is a sum of products of positive integers, and hence a positive integer. The statement $e_{k} = \sum_{j = 0}^{2^{k-1}}E_{j, k}$ follows from the definition of $E_{j, k}$.
\end{proof}
\begin{proposition}
	\begin{equation}
		\max_{0 \leq j \leq 2^{k}} E_{j, k} \geq \left(\max_{0 \leq j \leq 2^{k-1}} E_{j, k}\right)^{2}.
	\end{equation}
\end{proposition}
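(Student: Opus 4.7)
The plan is to exploit the compact form of the recursion proved in the previous proposition, namely $E_{k+1}(\lambda) = E_{k}(\lambda)^{2} - E_{k}(\lambda) + 1$, and look directly at a single coefficient of $E_{k+1}$ chosen so that the square $E_{k}(\lambda)^{2}$ contains the contribution $m^{2}$, where $m := \max_{0 \leq j \leq 2^{k-1}} E_{j,k}$. Let $j^{*}$ be an index realising this maximum; I will bound $[\lambda^{2j^{*}}]\,E_{k+1}(\lambda)$ from below by $m^{2}$.

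First, using the Cauchy product together with the linear correction, I would write
\begin{equation}
    [\lambda^{2j^{*}}]\,E_{k+1}(\lambda) \;=\; \sum_{\substack{\ell+i = 2j^{*} \\ 0 \leq \ell,i \leq 2^{k-1}}} E_{\ell,k}\,E_{i,k} \;-\; E_{2j^{*},k}\,[\,2j^{*} \leq 2^{k-1}\,] \;+\; [\,2j^{*}=0\,].
\end{equation}
The diagonal pair $\ell = i = j^{*}$ is always in range and contributes exactly $m^{2}$; all other pairs contribute non-negatively by the previous proposition. So the only thing that might eat into $m^{2}$ is the single subtracted coefficient $E_{2j^{*},k}$.

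The main step is to kill that subtraction by splitting into two cases. In Case A, where $2j^{*} \leq 2^{k-1}$ (equivalently $j^{*} \leq 2^{k-2}$), the off-diagonal pairs $(\ell,i) = (0,2j^{*})$ and $(2j^{*},0)$ are both valid and distinct from $(j^{*},j^{*})$ (assuming $j^{*} \geq 1$, i.e.\ $m \geq 2$; otherwise the inequality is trivial because all coefficients equal $1$). Each contributes $E_{0,k}\,E_{2j^{*},k} = E_{2j^{*},k}$, so the Cauchy sum is already at least $m^{2} + 2E_{2j^{*},k}$, and after subtracting $E_{2j^{*},k}$ we still have $[\lambda^{2j^{*}}]\,E_{k+1} \geq m^{2} + E_{2j^{*},k} > m^{2}$. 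In Case B, where $2j^{*} > 2^{k-1} = \deg E_{k}$, the subtracted term vanishes identically, and the Cauchy sum alone gives the $m^{2}$ bound. Since $2j^{*} \leq 2\cdot 2^{k-1} = 2^{k} = \deg E_{k+1}$ in both cases, the coefficient lies in the range over which the maximum on the left-hand side is taken.

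I expect the subtraction of $E_{k}(\lambda)$ to be the one place the proof could fail, so the essential obstacle is the bookkeeping that guarantees the two cross terms $(0,2j^{*})$ and $(2j^{*},0)$ exist and are genuinely new whenever $m \geq 2$. Once that is checked, both cases combine to give $\max_{0 \leq j \leq 2^{k}} E_{j,k+1} \geq m^{2}$, which is the claimed inequality (reading the left-hand side as referring to $E_{k+1}$, as the degree on that side indicates).
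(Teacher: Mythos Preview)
Your argument is correct and follows essentially the same idea as the paper: pick $j^{*}$ attaining the maximum $m$ and look at the coefficient of $\lambda^{2j^{*}}$ in $E_{k+1}$ via the Cauchy product, isolating the diagonal contribution $E_{j^{*},k}^{2}=m^{2}$. The only difference is bookkeeping: the paper uses the factored form $E_{k}(E_{k}-1)+1$ and the $\hat{E}$-convolution from the preceding proposition, so the $-E_{k}$ correction is already absorbed and the diagonal term (for $j^{*}\geq 1$) gives $m^{2}$ directly, whereas you expand $E_{k}^{2}-E_{k}+1$ and then explicitly cancel the subtracted $E_{2j^{*},k}$ against the cross terms $(0,2j^{*})$ and $(2j^{*},0)$ in Case~A. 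Your route is slightly longer but arguably more transparent about why the linear subtraction cannot spoil the bound.
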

\begin{proof}
	From the Cauchy product in the last proposition, if $j^*$ is the index of the largest coefficient of $E_k(\lambda)$, then for $j = 2j^*$ in $E_{k+1}(\lambda)$ the coefficient of $\left[ z^j \right]$ is
	\begin{equation}
		\sum_{\ell = 0}^{2j} E_{\ell, k}E_{2j - \ell, k}
	\end{equation}
	which, for $\ell = j^*$, contains
	\begin{equation}
		E_{j^*, k}E_{j^*, k} = E_{j^*, k}^{2}
	\end{equation}
	which establishes the proposition.
\end{proof}
\begin{proposition}
	The largest coefficient of $E_k(\lambda)$ grows doubly exponentially with~$k$.
\end{proposition}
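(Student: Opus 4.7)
The plan is to leverage the squaring recurrence established in the previous proposition. Define $M_k := \max_{0 \leq j \leq 2^{k-1}} E_{j, k}$. The previous proposition gives $M_{k+1} \geq M_k^{2}$, so if we can exhibit some $k_0$ with $M_{k_0} \geq 2$, then induction on $k$ yields $M_k \geq 2^{2^{k - k_0}}$ for all $k \geq k_0$, which is the very definition of doubly exponential growth.

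For the base case, I would just read the coefficients off the explicit list given in the introduction: $E_3(\lambda) = \lambda^4 + 2\lambda^3 + 2\lambda^2 + \lambda + 1$ has $M_3 = 2$, so we may take $k_0 = 3$ and obtain $M_k \geq 2^{2^{k-3}}$ for all $k \geq 3$.

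For a matching upper bound (so that ``doubly exponentially'' is the correct rate, not merely a lower bound), I would note that since every $E_{j, k}$ is a positive integer,
\begin{equation}
M_k \leq \sum_{j = 0}^{2^{k-1}} E_{j, k} = E_{k}(1) = e_k \>,
\end{equation}
and then invoke the formula $e_k = \lfloor E^{2^{k}} + \sfrac{1}{2}\rfloor$ from the introduction to conclude $M_k \leq E^{2^{k}} + 1$ with $E \approx 1.264$. Combining with the lower bound from the previous paragraph, $M_k$ is sandwiched between doubly exponential functions of $k$.

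There is essentially no obstacle here: all the real work is already packaged in the previous proposition, and the only subtlety is to choose the base case large enough that $M_{k_0} > 1$. Since $M_1 = M_2 = 1$, the squaring inequality is inert at $k = 1, 2$; it is the inspection of $E_3$ (or equally $E_4$, whose largest coefficient is $10$) that seeds the induction.
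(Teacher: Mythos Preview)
Your proposal is correct and essentially matches the paper's second proof: seed with $M_3 = 2$ read off from the explicit coefficient list, then iterate the squaring inequality $M_{k+1}\geq M_k^{2}$ from the previous proposition to get $M_k \geq 2^{2^{k-3}}$. The paper also records an alternative first proof via the averaging bound $M_k \geq e_k/(2^{k-1}+1)$, and your added upper bound $M_k \leq e_k$ (which the paper leaves implicit) rounds out the ``doubly exponential'' claim nicely.
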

\begin{proof}[Proof 1]
	\begin{equation}
		e_k = E_k(1) = \sum_{j = 0}^{2^{k-1}}E_{j, k} = \left\lfloor E^{2^{k}} + \dfrac{1}{2}\right\rfloor \>,
	\end{equation}
	then
	\begin{align}
		\max_{j}E_{j, k} &\geq \dfrac{1}{2^{k-1} + 1} \left \lfloor E^{2^{k}} + \dfrac{1}{2}\right \rfloor \nonumber\\
		&= E^{2^{k} - \mathcal{O}(k)} \>.
	\end{align}
\end{proof}
\begin{proof}[Proof 2]
	By inspection, $\max_{j} E_{j, 3} = 2$. Since $\max_{j} E_{j, 4} = 10 > 2^{2} = 2^{\sfrac{1}{4}\cdot 2^{3}}= 2^{\sfrac{1}{4}\cdot k}$, we are well on our way. Assume that $\max_{j} E_{j, k} = 2^{c_{i}2^{k}}$. Then $\max_{j} E_{j, k} \geq \left(2^{c_{1}\cdot 2^{k}}\right)^{2} = 2^{c_{1}2^{k+1}}$.
\end{proof}

\begin{proposition}
	The polynomials $E_{k}(\lambda)$ are all mutually relatively prime, as polynomials.
\end{proposition}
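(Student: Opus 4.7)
The plan is to mirror precisely the \cite{grahamconcrete} argument for Euclid \emph{numbers} being mutually coprime, with $\operatorname{mod}$ replaced by polynomial remainder in $\mathbb{Q}[\lambda]$ (or, equivalently, by divisibility in $\mathbb{Z}[\lambda]$ since every $E_k$ is monic). The whole content is already encoded in the recurrence
\begin{equation*}
  E_{n+1}(\lambda) \;=\; \lambda\,E_{n}(\lambda)E_{n-1}(\lambda)\cdots E_{1}(\lambda) \;+\; 1\,,
\end{equation*}
which is a polynomial ``$+\,1$'' statement analogous to $e_{n+1}=e_n e_{n-1}\cdots e_1 + 1$.

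Concretely, I would fix $m<n$ and observe that $E_{m}(\lambda)$ appears as one of the factors in the product $\lambda E_{n-1}(\lambda)E_{n-2}(\lambda)\cdots E_{1}(\lambda)$, so
\begin{equation*}
  E_{n}(\lambda) \;\equiv\; 1 \pmod{E_{m}(\lambda)}\,.
\end{equation*}
Therefore any $d(\lambda)\in\mathbb{Q}[\lambda]$ that divides both $E_{n}(\lambda)$ and $E_{m}(\lambda)$ must divide
$E_{n}(\lambda) - \lambda E_{n-1}(\lambda)\cdots E_{1}(\lambda) = 1$,
forcing $d(\lambda)$ to be a unit. Consequently $\gcd(E_{n},E_{m})=1$ in $\mathbb{Q}[\lambda]$, and since each $E_{k}$ is monic with integer coefficients this is equivalent to the statement that the only common divisors in $\mathbb{Z}[\lambda]$ are $\pm 1$.

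There is no real obstacle here: the whole proof is a one-line Euclidean step, exactly as in the number case, and the monicness of the $E_k$ (which we already established) is what lets us avoid fussing over content or denominators when passing between $\mathbb{Z}[\lambda]$ and $\mathbb{Q}[\lambda]$. If one wished to gild the lily, one could present it as a corollary of a sharper statement, namely that $E_{n}(\lambda)\bmod E_{m}(\lambda) = 1$ for all $n>m\ge 1$, which is itself just the recurrence unwrapped.
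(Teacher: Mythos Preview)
Your proposal is correct and follows essentially the same approach as the paper's own proof: both observe that $E_{n}(\lambda)\equiv 1\pmod{E_{m}(\lambda)}$ for $n>m$ directly from the recurrence, and then run one Euclidean step to conclude $\gcd(E_{n},E_{m})=1$. Your version is more explicit about monicness and the passage between $\mathbb{Z}[\lambda]$ and $\mathbb{Q}[\lambda]$, but the substance is identical.
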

\begin{proof}
	The proof is the same as that proving the $e_{k}$ are relatively prime integers: $E_{n}(\lambda) \equiv 1\mod E_m(\lambda)$ if $n>m \Rightarrow \operatorname{gcd}(E_{n}(\lambda), E_{m}(\lambda)) = \operatorname{gcd}(1, E_{m}(\lambda)) = 1$.
\end{proof}

\begin{proposition}
	The roots of $E_{k}(\lambda)$ are simple.
\end{proposition}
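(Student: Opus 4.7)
The plan is to derive a product formula for $E_k'(\lambda)$ from the quadratic recurrence, and then show that the vanishing conditions $E_k(\alpha) = 0$ and $E_k'(\alpha) = 0$ are mutually incompatible. Differentiating the identity $E_{k+1}(\lambda) = E_k(\lambda)(E_k(\lambda) - 1) + 1$ gives $E_{k+1}'(\lambda) = (2E_k(\lambda) - 1)\,E_k'(\lambda)$, and iterating this (together with $E_1'(\lambda) = 1$) yields the closed form
\[
	E_k'(\lambda) = \prod_{j=1}^{k-1}\bigl(2E_j(\lambda) - 1\bigr).
\]

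Suppose, for contradiction, that $\alpha$ is a repeated root of $E_k(\lambda)$ for some $k \geq 2$ (the case $k = 1$ is immediate). From $E_k(\alpha) = 0$, the recurrence forces $E_{k-1}(\alpha)^{2} - E_{k-1}(\alpha) + 1 = 0$, so
\[
	E_{k-1}(\alpha) = \dfrac{1 \pm i\sqrt{3}}{2},
\]
a primitive sixth root of unity --- in particular, a non-real number of modulus one, and certainly not equal to $\sfrac{1}{2}$. The condition $E_k'(\alpha) = 0$ together with the product formula then forces $E_j(\alpha) = \sfrac{1}{2}$ for some $j$ with $1 \leq j \leq k-1$. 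Since $E_{k-1}(\alpha) \neq \sfrac{1}{2}$, we must in fact have $j \leq k-2$.

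The key observation is that once $E_j(\alpha) = \sfrac{1}{2}$ is \emph{real}, the recurrence $E_{m+1}(\alpha) = E_m(\alpha)^{2} - E_m(\alpha) + 1$ produces real values forever: $E_{j+1}(\alpha) = \sfrac{3}{4}$, and a trivial induction shows $E_m(\alpha) \in \mathbb{R}$ for every $m \geq j$. In particular $E_{k-1}(\alpha) \in \mathbb{R}$, contradicting its computed non-real value. Hence $E_k(\alpha) = 0$ and $E_k'(\alpha) = 0$ cannot hold simultaneously, so every root of $E_k(\lambda)$ is simple.

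I do not anticipate a significant obstacle in executing this plan; the only mild subtlety is verifying carefully that the product formula for $E_k'$ has indexing that terminates at $j = 1$, which it does because $E_1'(\lambda) = 1$ closes off the recursive unrolling. The appealing feature of the argument is that no inductive hypothesis on the roots of lower-order $E_j$ is required --- the real/non-real dichotomy, combined with the fact that the fixed points of the iteration over $\mathbb{R}$ never escape into $\mathbb{C}\setminus\mathbb{R}$, does all the work.
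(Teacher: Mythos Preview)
Your proof is correct and follows essentially the same route as the paper's: differentiate the recurrence to obtain $E_{k+1}'=(2E_k-1)E_k'$, deduce that a double root forces $E_j(\alpha)=\sfrac{1}{2}$ for some earlier $j$, and then contradict the explicitly computed value $E_{k-1}(\alpha)=(1\pm i\sqrt{3})/2$ by propagating the value $\sfrac{1}{2}$ forward through the recurrence. The only cosmetic differences are that you package the derivative relation as a closed-form product and invoke ``stays real'' where the paper invokes ``stays rational''---both equally sufficient, since the target value is irrational and non-real.
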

\begin{proof}
	This is true for $E_{1}(\lambda)$ and $E_{2}(\lambda)$.
	
	Assume to the contrary that for some $k$ there exists a $\lambda^{*}$ for which both
	\begin{equation}
		E_{k+1}(\lambda^{*}) = 0
	\end{equation}
	and
	\begin{equation}
		E'_{k+1}(\lambda^{*}) = 0 \>.
	\end{equation}
	Then since
	\begin{equation}
		E_{j+1}(\lambda) = E_{j}(\lambda)\left(E_{j}(\lambda) - 1\right) + 1 \>, 
	\end{equation}
	we have
	\begin{equation}
		E'_{j+1}(\lambda) = \left(2E_{j}(\lambda) - 1\right) E'_{j}(\lambda) \>.
	\end{equation}
	Therefore, either $E_{k}(\lambda^{*}) = \sfrac{1}{2}$ (which is impossible because then $E_{k+1}(\lambda^{*}) = \sfrac{1}{2}(-\sfrac{1}{2}) + 1 = \sfrac{3}{4}\neq 0$) or $E'_{k}(\lambda^*) = 0$. If there exists any $\ell < k$ for which $E'_{\ell}(\lambda^*) \neq 0$ while $E'_{\ell + 1}(\lambda^*) = 0$, then $E_{\ell}(\lambda^*) = \sfrac{1}{2}$ because $E'_{\ell + 1}(\lambda) = \left(2E_{\ell}(\lambda) - \sfrac{1}{2}\right)E'_{\ell}(\lambda)$. If $E_{\ell}(\lambda^{*}) = \sfrac{1}{2}$, then $E_{j}(\lambda^*)$ for $j \geq \ell$ is rational because
	\begin{equation}
		E_{j+1}(\lambda^*) = E_{j}(\lambda^*)(E_{j}(\lambda^*)-1)
	\end{equation}
	is a product of rational numbers.
	
	This gives an ultimate contradiction because
	\begin{equation}
		E_{k}(\lambda^*)(E_{k}(\lambda^*) - 1) + 1 = 0
	\end{equation}
	only if $E_{k}(\lambda^*) = -\sfrac{1}{2} \pm \sfrac{i\sqrt{3}}{2} \notin \mathbb{Q}$. \mbox{} \\
\end{proof}
\begin{proposition}
	\begin{equation}
		\dfrac{1}{\lambda} = \sum_{k=1}^{n} \dfrac{1}{E_{k}(\lambda)} + \dfrac{1}{E_{n+1}(\lambda) - 1} \>.
		\label{eqn:egypt_poly}
	\end{equation}
\end{proposition}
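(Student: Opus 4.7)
The plan is to prove the identity by induction on $n$, mirroring the proof of Lemma \ref{lemma:egypt} essentially line by line, since the key recurrence $E_{k+1}(\lambda) = E_k(\lambda)(E_k(\lambda) - 1) + 1$ is the exact polynomial analogue of $e_{k+1} = e_k(e_k - 1) + 1$.

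For the base case $n = 1$, I would compute directly:
\begin{equation*}
	\frac{1}{E_1(\lambda)} + \frac{1}{E_2(\lambda) - 1} = \frac{1}{\lambda + 1} + \frac{1}{\lambda(\lambda+1)} = \frac{\lambda + 1}{\lambda(\lambda+1)} = \frac{1}{\lambda} \>,
\end{equation*}
using $E_2(\lambda) - 1 = \lambda^2 + \lambda = \lambda(\lambda + 1)$.

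For the inductive step, assume the identity holds for $n$. Starting from the induction hypothesis, I would add and subtract $1/E_{n+1}(\lambda)$ to rewrite
\begin{equation*}
	\frac{1}{\lambda} = \sum_{k=1}^{n+1} \frac{1}{E_k(\lambda)} + \left(\frac{1}{E_{n+1}(\lambda) - 1} - \frac{1}{E_{n+1}(\lambda)}\right) \>,
\end{equation*}
combine the parenthesized terms over a common denominator to obtain $1 / \bigl(E_{n+1}(\lambda)(E_{n+1}(\lambda) - 1)\bigr)$, and then invoke the identity $E_{n+2}(\lambda) - 1 = E_{n+1}(\lambda)(E_{n+1}(\lambda) - 1)$ --- which follows immediately from the closed-form recurrence $E_{k+1} = E_k(E_k - 1) + 1$ --- to conclude that this last expression equals $1/(E_{n+2}(\lambda) - 1)$, completing the induction.

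There is essentially no obstacle: the argument is a routine induction whose only content is the telescoping identity $\tfrac{1}{m-1} - \tfrac{1}{m} = \tfrac{1}{m(m-1)}$ applied with $m = E_{n+1}(\lambda)$, together with the defining recurrence for $E_{n+2}(\lambda)$. The slight subtlety worth noting is that everything here takes place in the field of rational functions $\mathbb{Q}(\lambda)$, so the manipulations of $1/\lambda$, $1/E_k(\lambda)$, and $1/(E_{n+1}(\lambda) - 1)$ are legitimate as identities of rational functions (equivalently, they hold at every $\lambda$ that is not a root of $\lambda$, of any $E_k(\lambda)$, or of $E_{n+1}(\lambda) - 1$).
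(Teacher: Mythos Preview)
Your proof is correct and follows exactly the same approach as the paper: the paper's proof simply says the argument is identical to that of Lemma~\ref{lemma:egypt} upon substituting $E_k(\lambda)$ for $e_k$, together with the base-case verification $\tfrac{1}{\lambda+1} + \tfrac{1}{\lambda(\lambda+1)} = \tfrac{1}{\lambda}$, which is precisely the induction you carry out.
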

\begin{proof}
	Identical to Lemma \ref{lemma:egypt} on substituting $E_{k}(\lambda)$ for $e_k$ and noting
	\begin{align}
		\dfrac{1}{\lambda} &= \dfrac{1}{\lambda + 1} + \dfrac{1}{\lambda^2 + \lambda} \nonumber \\
		&= \dfrac{\lambda}{\lambda^2 + \lambda} + \dfrac{1}{\lambda^2 + \lambda} \nonumber \\
		&= \dfrac{\lambda + 1}{\lambda(\lambda + 1)} \nonumber \\
		&= \dfrac{1}{\lambda} \>.
	\end{align}
\end{proof}
\begin{remark}
	The series in equation \eqref{eqn:egypt_poly} converges if $\lambda > 0$ and diverges if $\lambda = -\sfrac{1}{2}$.
\end{remark}
\begin{conjecture}
	There is convergence outside the ``cauliflower" in Figure \ref{fig:euclid} and divergence inside the cauliflower.
\end{conjecture}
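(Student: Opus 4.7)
The plan is to recast the recursion as a complex dynamical system and invoke standard Julia-set theory. Put $f(z) = z(z-1)+1 = z^2 - z + 1$; then the Euclid recurrence reads $E_{k+1}(\lambda) = f(E_k(\lambda))$, so $E_k(\lambda) = f^{\circ(k-1)}(\lambda+1)$. The affine conjugation $w = z - 1/2$ sends $f$ to the classical ``cauliflower'' map $g(w) = w^2 + 1/4$, whose unique fixed point $w_* = 1/2$ is parabolic with multiplier $g'(w_*) = 1$. Setting $w_k := E_k(\lambda) - 1/2$, the iteration becomes $w_{k+1} = w_k^2 + 1/4$ with seed $w_1 = \lambda + 1/2$. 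Thus the cauliflower in Figure \ref{fig:euclid} should be (up to the shift $\lambda \mapsto \lambda + 1/2$) nothing but the filled Julia set $K_g$ of $g$; making this identification rigorous, via an escape-time argument showing the figure was drawn from exactly this bounded-orbit condition, is the first step.

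Next I would reduce the convergence question to the size of $E_n - 1$. From $E_{k+1}-1 = E_k(E_k-1)$ and the telescoping identity \eqref{eqn:egypt_poly}, the partial sums satisfy
\begin{equation*}
\sum_{k=1}^n \frac{1}{E_k(\lambda)} \;=\; \frac{1}{\lambda} \;-\; \frac{1}{E_{n+1}(\lambda)-1}\>.
\end{equation*}
Hence the series converges, necessarily to $1/\lambda$, if and only if $|E_{n+1}(\lambda)-1|\to\infty$, and it diverges whenever $E_{n+1}(\lambda)-1\to 0$. Note that $E_n - 1 = w_n - 1/2$, so the question is entirely controlled by how close $w_n$ comes to the parabolic fixed point $1/2$ of $g$.

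For the exterior case, I would invoke the standard quadratic-escape estimate: if $\lambda + 1/2 \notin K_g$, once $|w_k|$ exceeds any fixed radius greater than $(1+\sqrt{2})/2$ the map roughly squares it, so $|w_k|$ (and hence $|E_k-1|$) grows doubly exponentially, giving convergence. For the interior, I would use the Leau--Fatou flower theorem applied to the local expansion $g(1/2 + u) = 1/2 + u + u^2$: every orbit in the immediate parabolic basin enters an attracting petal and satisfies $u_n = w_n - 1/2 \sim -1/n$; every other interior orbit eventually lands in the immediate basin via a preimage. In particular $E_n - 1 \to 0$, so $1/(E_{n+1}(\lambda)-1)$ blows up like $-n$ and the partial sums diverge.

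The principal obstacle is the boundary $\partial K_g$ itself (the Julia set), where orbits are bounded but typically neither converge to $w_*$ nor escape, so the above dichotomy is silent. Since the stated conjecture concerns only the open exterior and the open interior of the cauliflower this can be set aside; a fully rigorous treatment will, however, need (i) the identification of the drawn cauliflower with $K_g - 1/2$, and (ii) a careful handling of preimages of the immediate parabolic basin to certify that every interior orbit eventually reaches the attracting petal. The parabolic asymptotics $u_n \sim -1/n$ is classical and should be routine; the most delicate conceptual point is really just accepting that what the figure labels ``cauliflower'' is the bounded-orbit set of $g$ translated by $-1/2$, after which everything else is standard complex dynamics combined with the telescoping identity.
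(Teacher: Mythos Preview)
The statement you are addressing is listed in the paper as a \emph{Conjecture}; the authors provide no proof at all. So there is no ``paper's own proof'' to compare against---you are in fact proposing a resolution of something the paper leaves open.

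Your strategy is sound and essentially complete as a proof outline. The conjugation $w=z-\tfrac12$ taking $f(z)=z^2-z+1$ to $g(w)=w^2+\tfrac14$ is exactly the right move; the paper actually records this very substitution in a footnote in Section~3 (relating Euclid numbers to Mandelbrot polynomials) but never exploits it dynamically. Your reduction via the telescoping identity to the behaviour of $|E_{n+1}-1|=|w_{n+1}-\tfrac12|$ is clean, and the dichotomy---doubly-exponential escape outside $K_g$, parabolic attraction $u_n\sim -1/n$ inside via Leau--Fatou---is the correct classical toolkit. The ``if and only if'' you assert is fully justified: if $1/(E_{n+1}-1)$ converged to a finite nonzero limit then $E_n$ would converge to a fixed point of $f$ other than $1$, but $f$ has only the double fixed point $z=1$.

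The honest gaps you flag are the right ones. The identification of the plotted ``cauliflower'' with $K_g-\tfrac12$ is really a statement about what Figure~\ref{fig:euclid} depicts rather than a mathematical obstacle: the plotted points are roots of $E_{15}$, i.e.\ $(k{-}1)$-fold $g$-preimages (in the $w_1$-variable) of $w=-\tfrac12$, and since $g(-\tfrac12)=\tfrac12$ is the parabolic fixed point, $-\tfrac12$ lies on $\partial K_g$ and its iterated preimages are dense in the Julia set. The boundary $\partial K_g$ itself remains genuinely outside your dichotomy, as you note; the conjecture as phrased does not claim anything there.
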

\begin{definition}
	We say that a polynomial $p(\lambda)$ is \textsl{unimodal}~\cite{kalugin2010unimodal} if its coefficient vector $\left[ a_0, a_1, \cdots, a_n\right]$ of positive integers has first monotonic increase to a peak (which may occur twice or more at adjacent coefficients) and then decay to $a_n = 1$. Notice that $E_1(\lambda)$, $E_{2}(\lambda)$, $E_{3}(\lambda)$ and $E_{4}(\lambda)$ are unimodal.
\end{definition}
\begin{conjecture}
	The Euclid polynomials are unimodal.
\end{conjecture}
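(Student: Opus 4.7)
The natural plan is induction on $k$, using the recurrence $E_{k+1}(\lambda) = E_k(\lambda)^2 - E_k(\lambda) + 1$. The base cases $k \leq 4$ are the explicit polynomials listed in the definition, and the earlier propositions already give that each coefficient vector $a_j := E_{j,k}$ consists of positive integers with $a_0 = a_{2^{k-1}} = 1$. The inductive task is to show that if $(a_j)$ first weakly increases and then weakly decreases, then so does the coefficient sequence of $E_{k+1}$, whose entries are
\[
E_{0,k+1} = 1, \qquad E_{j,k+1} \;=\; \sum_{\ell=0}^{j} a_\ell\, a_{j-\ell} \;-\; a_j \quad (j \geq 1).
\]

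A tempting shortcut would be to invoke log-concavity, which is preserved under convolution of nonnegative sequences and implies unimodality. However, $E_4$ has coefficient vector $[1,1,3,6,9,10,8,4,1]$, and already $a_1^2 = 1 < 3 = a_0 a_2$, so the Euclid polynomials fail log-concavity as early as $k = 4$. A more delicate invariant must therefore be tracked through the recursion.

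I would pursue two parallel strategies. The first is to work directly with the first-difference sequence $c_j := a_{j+1} - a_j$: unimodality is precisely the statement that $(c_j)$ has exactly one sign change, from nonnegative to nonpositive. One may compute the analogous difference sequence of $E_k^2 - E_k$ by a convolution identity, split the range into the increasing and decreasing halves of $E_k$, and try to show the single-sign-change property is inherited, with the $-E_k$ term nudging the peak slightly leftward in a controlled way. The second strategy exploits the alternative form $E_{k+1}(\lambda) - 1 = \lambda\, E_1(\lambda) E_2(\lambda)\cdots E_k(\lambda)$: it would suffice to prove that the product $P_k(\lambda) = E_1(\lambda)\cdots E_k(\lambda)$ is unimodal, since then $\lambda P_k + 1$ inherits unimodality (the shift by $\lambda$ prepends a zero, subsequently replaced by $1$, and $P_k(0)=1$ so the sequence begins $1,1,\ldots$). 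One might attempt to combine the factors two at a time, using an intermediate hypothesis that the peaks of consecutive $E_j$'s are well-aligned.

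The principal obstacle is that neither squaring nor multiplication of polynomials preserves unimodality in general, so neither strategy can proceed by soft arguments alone; one must exploit structure peculiar to the Euclid family. The numerical data, together with the growth estimate $\max_j E_{j,k} = E^{2^k - \mathcal{O}(k)}$ from the previous proposition, suggests the peak lies somewhat past the midpoint of $\{0,1,\ldots,2^{k-1}\}$ and is of overwhelming magnitude relative to the tails; this hints that a saddle-point analysis of $E_k(e^{i\theta})$ on the unit circle could provide analytic control on peak location and width. I expect that any purely combinatorial attempt will need to be supplemented by such quantitative information before the induction closes, and that this is where the conjecture will mainly resist proof.
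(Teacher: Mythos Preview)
The paper states this as a \emph{conjecture} and offers no proof; there is nothing in the paper to compare your attempt against. Your proposal is likewise not a proof but a candid outline of strategies, and you are explicit about the obstacles---in particular, you correctly observe that log-concavity already fails for $E_4$ (since $a_1^2=1<3=a_0a_2$), so the standard convolution-preserves-log-concavity route is blocked, and that unimodality is \emph{not} closed under multiplication or squaring, so neither the recurrence $E_{k+1}=E_k^2-E_k+1$ nor the product form $E_{k+1}-1=\lambda E_1\cdots E_k$ yields the result by soft arguments. These diagnoses are accurate and useful, but they leave the problem exactly where the paper leaves it: open.
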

\begin{remark}
	The doubly exponential growth of the polynomial coefficients mean that the \textsl{conditioning} of the polynomial grows doubly exponentially in $k$. Note that since the degree $\operatorname{deg}E_{k} = 2^{k-1}$, this means that the conditioning grows exponentially in the degree. In contrast, we will see in section \ref{sec:condition_number} a much better condition number, sublinear in the degree. This means that evaluation (and rootfinding) requires significantly more precision (and therefore expense) if the monomial basis is used. The following definition is used in \cite{farouki1987numerical} and \cite{corless2013graduate}:
	\begin{equation}
		B_{k}(\lambda) = \sum_{j = 0}^{2^{k-1}} E_{j, k}\left| \lambda \right|^{j}
	\end{equation}
	as a ``condition number" for a given $\lambda$. One can show that if
	\begin{equation}
		p_{k}(\lambda) = \sum_{j = 0}^{2^{k-1}} E_{j, k}(1 + \delta_j)\lambda^{j}
	\end{equation}
	then $p_{k}(\lambda)$ differs from $E_k(\lambda)$ by at most 
	\begin{equation}
		\left| p_k(\lambda) - E_{k}(\lambda)\right| \leq B_k(\lambda) \cdot \max_{0 \leq j \leq 2^{k-1}} \left|\delta_{k}\right| \>.
	\end{equation}
	This shows that relative errors $\delta_k$ in the coefficients produce absolute errors in the values at most $B(\lambda)||\delta||_\infty$. From the foregoing discussion it is evident that on $0 \leq \lambda \leq 1$
	\begin{align}
		B_{k}(\lambda) &= \mathcal{O}\left(E^{2^{k}}\right) \\
		&= \mathcal{O}\left(e^{2\operatorname{deg}E_{k}(\lambda)}\right)
	\end{align}
	is exponentially large in the degree of $E_{k}(\lambda)$. That is, in order to ensure that numerical errors in evaluation (which, by standard backward error results are equivalent to $\mathcal{O}(\mu)$, where $\mu$ is the unit roundoff, relative changes in the coefficients) would require that the unit roundoff to be of size
	\begin{equation}
		\mu = \mathcal{O}\left(E^{-2\operatorname{deg}E_{k}(\lambda)}\right)
	\end{equation}
	which in turn requires $\mathcal{O}\left(2\operatorname{deg}E_{k}\right)$ bits of precision; this is an exponential number of bits of precision, in $k$. To evaluate $E_k(\lambda)$ (or to find its roots) one would need to use $\mathcal{O}\left(2^{k}\right)$ bit arithmetic. This is of course possible, but the cost of multiplication of high precision number grows faster than the precision length.	
\end{remark}
Luckily, there's a better way: minimal height companion matrices.

\section{A Brief History of the Technique}
In 2011, Piers W.~Lawrence invented a family of companion matrices for the Mandelbrot polynomials\footnote{It can be shown that the Euclid polynomials are related to the Mandelbrot polynomials. We can rewrite the Euclid polynomials as
\begin{align}
	f_{n+1} &= f_n^2 + \frac{1}{4} \nonumber \\
	4f_{n+1} &= \frac{1}{4}\left(4f_n\right)^2 + 1 \>. 
\end{align}
We can then let $u_n = 4f_n$, so
\begin{equation}
	u_{n+1} = \frac{1}{4}u_n^2 + 1 \>,
\end{equation}
which recurrence is the same as for the Mandelbrot polynomials, except with $z = \sfrac{1}{4}$ and
\begin{equation}
	u_1 = 4f_1 = 4\left(e_1 - \sfrac{1}{2}\right) = 2 \>;
\end{equation}
whereas $p_1 = 1$.
}, defined by $p_1(\lambda) = 1$ and for $n \geq 0$
\begin{equation}
	p_{n+1}(\lambda) = \lambda p_{n}^2(\lambda) + 1 \>.
\end{equation}
We have $p_2(\lambda) = \lambda + 1$ with a (trivial) companion matrix $\mathbf{M}_2 = \left[ -1 \right]$. Piers invented a recursive construction,
\begin{equation}
	\mathbf{M}_{n+1}\left[
		\begin{array}{ccc}
			\mathbf{M}_n & & -\mathbf{c}_n\mathbf{r}_n \\
			-\mathbf{r}_n & 0 & \\
			& -\mathbf{c}_n & \mathbf{M}_n
		\end{array}
	\right]
\end{equation}
where $\mathbf{r}_n = \left[\begin{array}{cccc} 0 & 0 & \cdots & 1\end{array}\right]$ and $\mathbf{c}_n = \left[\begin{array}{cccc} 1 & 0 & \cdots & 0 \end{array}\right]^{T}$, given
\begin{align}
	p_{n+1}(\lambda) &= \operatorname{det}\left(\lambda\mathbf{I} - \mathbf{M}_{n+1}\right) \nonumber \\ 	&= \lambda\operatorname{det}\left(\lambda\mathbf{I} - \mathbf{M}_n\right)^{2} + 1\>.
\end{align}

In her Masters' thesis~\cite{Chan2016}, Eunice Chan extended this construction to Fibonacci-Mandelbrot polynomials $q_n(\lambda)$ satisfying
\begin{align}
	q_0(\lambda) &= 0 \nonumber \\
	q_1(\lambda) &= 1 \nonumber \\
	q_{n+1}(\lambda) &= \lambda q_{n}(\lambda)q_{n-1}(\lambda) + 1
\end{align}
and Narayana-Mandelbrot polynomials $r_{n}(\lambda)$ satisfying
\begin{align}
	r_0(\lambda) &= 1 \nonumber \\
	r_1(\lambda) &= 1 \nonumber \\
	r_2(\lambda) &= 1 \nonumber \\
	r_{n+1}(\lambda) &= \lambda r_{n}(\lambda)r_{n-2}(\lambda) + 1 \>.
\end{align}
Chan used these to explore the comparative efficiency of linearization (companion matrices) and homotopy methods (i.e.~following paths, also called continuation methods, from roots of $p_{n}(\lambda)$ to roots of $p_{n+1}(\lambda)$ and similarly for the others). [Spoiler alert: homotopy wins, hands down.]

These families of polynomials all have similarities and it is not really surprising that analogues of Piers Lawrence's construction work to make companion matrices.

Donald E.~Knuth suggested we look at Euclid numbers (polynomials). The fact that it worked immediately suggested that the construction was in fact general, which led to the papers \cite{chan2017new} and \cite{chan2017constructing}.

We return from that generality to the Euclid polynomials, which are interesting enough in themselves to deserve further attention. In the rest of this paper, we show how this general technique of construction applies to the Euclid polynomials, how far we can push it, and what we learn in the process.

\section{Computation of eigenvalues}
\begin{figure}
	\centering
	\includegraphics[width = 0.75\textwidth]{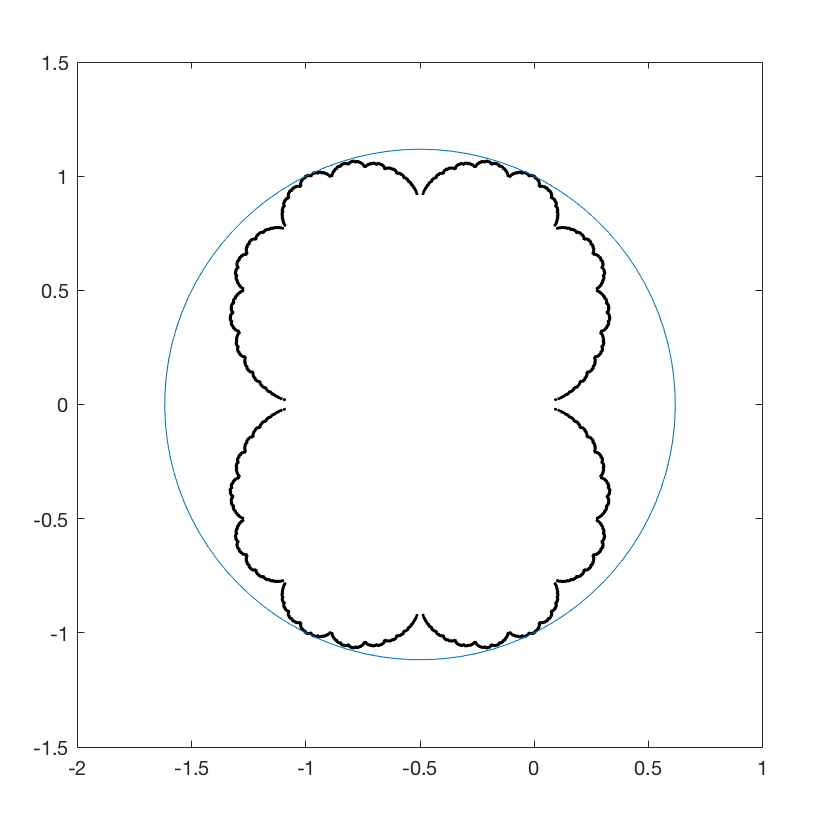}
	\caption{All $16,384$ roots of the Euclid polynomial $E_{15}(z)$ with circle of radius 1.1180, the approximate magnitude of the largest $\left| \lambda + \sfrac{1}{2}\right|$.}
	\label{fig:euclid}
\end{figure}

Suppose $E_k = \det\left(\lambda \mathbf{I} - \mathbb{E}_k\right)$. Each identity matrix $\mathbf{I}$ is a different size, but this should be natural enough: it will be $\operatorname{deg} E_k$ by $\operatorname{deg} E_k$ if it's being used in $\lambda \mathbf{I} - \mathbb{E}_k$. Notice that this amounts to a strong induction---we will need companion matrices for each prior polynomial in order to find one for $E_{k+1}$. Then put
\begin{equation}
	\widetilde{\mathbb{E}}_{k} :=
	\left[
	\begin{array}{c@{}c@{}c@{}c@{}c}
		\cdashline{1-1}
		\multicolumn{1}{:c:}{0} & & & & \mbox{\scriptsize{$\begin{array}{cc} &  \\ & \end{array}$}}\\
		\cdashline{1-2}
		\mbox{\scriptsize{$\begin{array}{cc}& -1 \\ {} & {} \end{array}$}} & \multicolumn{1}{:c:}{\mathbb{E}_1} & & & \\
		\cdashline{2-3}
		& \mbox{\scriptsize{$\begin{array}{cc}& -1 \\ {} & {} \end{array}$}} &  \multicolumn{1}{:c:}{\ddots} & & \\
		\cdashline{3-4}
		& & \mbox{\scriptsize{$\begin{array}{cc}& -1 \\ {} & {} \end{array}$}} &  \multicolumn{1}{:c:}{\mathbb{E}_{k-2}} & \\
		\cdashline{4-5}
		& & & \mbox{\scriptsize{$\begin{array}{cc}& -1 \\ {} & {} \end{array}$}} &  \multicolumn{1}{:c:}{\mathbb{E}_{k-1}}\\
		\cdashline{5-5}
	\end{array}
	\right]
	= \mathbb{E}_{k} - 
	\left[
		\begin{array}{cccc}
			0 & \cdots & 0 & 1 \\
			& & & 0 \\
			& & & \vdots \\
			& & & 0
		\end{array}
	\right] \>.
\end{equation}

\begin{remark}
	$\operatorname{det}\left(\lambda\mathbf{I} - \widetilde{\mathbb{E}}_{k} \right) = E_{k}(\lambda) - 1 = \lambda\sum_{j = 1}^{k-1}E_{j}(\lambda)$; subtracting $1$ just changes the final column of this companion (see \cite{chan2017constructing}).
\end{remark}

This is upper Hessenberg, but block lower triangular; therefore, its determinant is the product of the determinants of the blocks (see e.g.~\cite{horn2012matrix}) , and similarly for the resolvent~\cite{meyer2000matrix}, like so:
\begin{equation}
	\det \left( \lambda \mathbf{I} - \widetilde{\mathbb{E}}_{k}\right) = \lambda E_1(\lambda)E_2(\lambda) E_3(\lambda) \cdots E_{k-1}(\lambda) \>.
\end{equation}
Therefore, if we put a $1$ in the upper right corner (we will see shortly it must be $+1$),
\begin{equation}
	\mathbb{E}_{k+1} :=
	\left[
	\begin{array}{cc}
		\cdashline{1-1}
		\multicolumn{1}{:c:}{\widetilde{\mathbb{E}}_{k}} & \mbox{\scriptsize{$\begin{array}{cc} & 1 \\ & \end{array}$}} \\
		\cdashline{1-2}
		\mbox{\scriptsize{$\begin{array}{cc} & -1 \\ & \end{array}$}} & \multicolumn{1}{:c:}{\mathbb{E}_{k}}\\
		\cdashline{2-2}
	\end{array}
	\right] \>,
\end{equation}
we will have $E_{k+1}(\lambda) = \det \left( \lambda \mathbf{I} - \mathbb{E}_{k+1}\right)$ for $k \geq 2$ and $\mathbb{E}_{k+1}$ will be (irreducibly) upper Hessenberg if $\mathbb{E}_k$ is.

Explicitly, $\mathbb{E}_1 = \left[ -1 \right]$ and we may take
\begin{equation}
	\mathbb{E}_2 =
	\left[
	\begin{array}{cc}
		\cdashline{1-1}
		\multicolumn{1}{:c:}{\phantom{-}0} & \phantom{-}1 \\
		\cdashline{1-2}
		-1 & \multicolumn{1}{:c:}{-1}\\
		\cdashline{2-2}
	\end{array}
	\right]
\end{equation}
because $\det \left( \lambda \mathbf{I} - \mathbb{E}_2\right) = \det \left( \begin{array}{cc}\lambda & -1 \\ 1 & \lambda + 1 \end{array}\right) = \lambda\left(\lambda + 1 \right) + 1 = E_2(\lambda)$. Therefore,
\begin{equation}
	\mathbb{E}_{3} =
	\left[
		\begin{array}{cccc}
			\cdashline{1-1}
			\multicolumn{1}{:c:}{\phantom{-}0} & & & \phantom{-}1 \\
			\cdashline{1-2}
			-1 & \multicolumn{1}{:c:}{-1} & & \\
			\cdashline{2-4}
			& -1 & \multicolumn{1}{:c}{\phantom{-}0} & \multicolumn{1}{c:}{\phantom{-}1} \\
			& & \multicolumn{1}{:c}{-1} & \multicolumn{1}{c:}{-1} \\
			\cdashline{3-4}
		\end{array}
	\right] \>.
\end{equation}
To confirm, we form
\begin{equation}
	\lambda \mathbf{I} - \mathbb{E}_3 = \left[
	\begin{array}{cccc}
		\lambda & 0 & 0 & -1 \\
		1 & \lambda + 1 & 0 & \phantom{-}0 \\
		& 1 & \lambda & -1 \\
		& & 1 & \lambda + 1
	\end{array}
	\right] \>.
\end{equation}
A short computation shows
\begin{align}
	\det \left( \lambda \mathbf{I} - \mathbb{E}_3 \right) &= \lambda\left(\lambda + 1\right)\left(\lambda\left(\lambda + 1\right) + 1\right) + 1 \nonumber \\
	&= \lambda E_{1}(\lambda) E_{2}(\lambda) + 1 \nonumber \\
	&= E_{3}(\lambda)
\end{align}
as desired. Emboldened, we build
\begin{equation}
	\mathbb{E}_{4} = 
	\left[
		\begin{array}{cccccccc}
			\cdashline{1-1}
			\multicolumn{1}{:c:}{\phantom{-}0} & & & & & & & \phantom{-}1 \\
			\cdashline{1-2}
			-1 & \multicolumn{1}{:c:}{-1} & & & & & & \\
			\cdashline{2-4}
			& -1 & \multicolumn{1}{:c}{\phantom{-}0} & \multicolumn{1}{c:}{\phantom{-}1} & & & & \\
			& & \multicolumn{1}{:c}{-1} & \multicolumn{1}{c:}{-1} & & & & \\
			\cdashline{3-8}
			& & & -1 & \multicolumn{1}{:c}{\phantom{-}0} & & & \multicolumn{1}{c:}{\phantom{-}1} \\
			& & & & \multicolumn{1}{:c}{-1} & -1 & & \multicolumn{1}{c:}{} \\
			& & & & \multicolumn{1}{:c}{} & -1 & \phantom{-}0 & \multicolumn{1}{c:}{\phantom{-}1} \\
			& & & & \multicolumn{1}{:c}{} & & -1 & \multicolumn{1}{c:}{-1} \\
			\cdashline{5-8}
		\end{array}
	\right] 
\end{equation}
and direct computation again shows 
\begin{align}
	\operatorname{det}\left(\lambda\mathbf{I} - \mathbb{E}_{4}\right) &= \lambda\left(\lambda + 1\right)\left(\lambda\left(\lambda+1\right) + 1\right)\left(\lambda\left(\lambda + 1\right)\left(\lambda\left(\lambda + 1\right)+1\right) + 1\right) + 1 \nonumber \\
	&= \lambda E_{1}(\lambda) E_{2}(\lambda) E_{3}(\lambda) + 1 \nonumber \\
	&= E_{4}(\lambda) \>.
\end{align}

\begin{theorem}
	\begin{equation}
		E_{k}(\lambda) = \operatorname{det}\left(\lambda\mathbf{I} - \mathbb{E}_k\right)
	\end{equation}
	where $\mathbb{E}_{k}$ is defined as above.
\end{theorem}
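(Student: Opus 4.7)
The plan is to proceed by strong induction on $k$, with the base cases $k=1,2$ already verified by direct computation in the text. The critical observation driving the induction is that $\mathbb{E}_{k+1}$ differs from $\widetilde{\mathbb{E}}_{k+1}$---the matrix obtained by one further application of the $\sim$-construction, containing $\mathbb{E}_1,\ldots,\mathbb{E}_k$ on its diagonal with the appropriate $-1$ connectors---in only a single entry: the $+1$ at position $(1,n)$ with $n=\deg E_{k+1}=2^{k}$. The $+1$ in the recurrence $E_{k+1}(\lambda)=\lambda E_1(\lambda)\cdots E_k(\lambda)+1$ will have to come out of precisely this corner entry.

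Alongside the main claim I will carry an auxiliary invariant: every $\mathbb{E}_k$ is (irreducibly) upper Hessenberg with every subdiagonal entry equal to $-1$. This is preserved by the recursion because the diagonal blocks $\mathbb{E}_1,\ldots,\mathbb{E}_{k-1}$ inherit the property by induction, the $[0]$ block at the top-left is harmless, and the little connector blocks $\begin{smallmatrix}& -1\\& \end{smallmatrix}$ are positioned so that their $-1$ lands exactly on the global subdiagonal between consecutive diagonal blocks; the single $+1$ in the top-right corner sits strictly in the upper triangle and changes neither property.

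Given the invariant, $\widetilde{\mathbb{E}}_{k+1}$ is block lower triangular with diagonal blocks $[0],\mathbb{E}_1,\ldots,\mathbb{E}_k$, so by the inductive hypothesis
\[ \det(\lambda\mathbf{I}-\widetilde{\mathbb{E}}_{k+1}) \;=\; \lambda\prod_{j=1}^{k}E_j(\lambda). \]
Since $\lambda\mathbf{I}-\mathbb{E}_{k+1}$ and $\lambda\mathbf{I}-\widetilde{\mathbb{E}}_{k+1}$ agree outside position $(1,n)$, where they hold $-1$ and $0$ respectively, expanding both determinants by cofactors along the first row and subtracting gives
\[ \det(\lambda\mathbf{I}-\mathbb{E}_{k+1}) - \det(\lambda\mathbf{I}-\widetilde{\mathbb{E}}_{k+1}) \;=\; (-1)^{n}\det(M), \]
where $M$ is the $(n-1)\times(n-1)$ submatrix formed by deleting row $1$ and column $n$ of $\lambda\mathbf{I}-\mathbb{E}_{k+1}$.

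The computation then reduces to evaluating $\det(M)$. Because $\lambda\mathbf{I}-\mathbb{E}_{k+1}$ is upper Hessenberg, deleting its first row and last column shifts the subdiagonal up onto the main diagonal of $M$, so $M$ is upper triangular; by the auxiliary invariant each of its diagonal entries equals $+1$, so $\det(M)=1$. Since $n=2^{k}$ is even for $k\geq 1$ one has $(-1)^{n}=1$, and the required $+1$ appears exactly on the right-hand side, closing the induction. The main obstacle I expect is purely bookkeeping: making the auxiliary invariant truly airtight by identifying, in the global indexing, precisely which cells the little connector blocks and the corner $+1$ occupy, so that both \textbf{\emph{only difference at $(1,n)$}} and \textbf{\emph{subdiagonal is all $-1$}} follow rigorously rather than by inspection of the pictures.
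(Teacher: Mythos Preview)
Your proof is correct and follows essentially the same route as the paper's: both use the block lower-triangular structure of $\widetilde{\mathbb{E}}_{k+1}$ to get the product $\lambda\prod_{j=1}^{k}E_j(\lambda)$, then isolate the contribution of the corner entry via linearity of the determinant in the first row (the paper's phrasing) or, equivalently, by cofactor expansion along that row (your phrasing). Your treatment is in fact more explicit than the paper's sketch---you carry the auxiliary invariant that the subdiagonal of $\mathbb{E}_k$ is identically $-1$, which is what makes the minor $M$ upper triangular with unit diagonal, whereas the paper compresses this into the single line ``the $1$ in the corner contributes $(-1)^{\deg E_k-1}\cdot(-1)^{\deg E_k-1}=+1$.''
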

\begin{proof}
	This follows immediately from Theorem 4 of \cite{chan2017constructing}. An easy proof follows from linearity of $\left(\lambda \mathbf{I} - \mathbb{E}_k\right)$ in its first row, and that the determinant of a block lower triangular matrix is the product of the determinants of the blocks; the 1 in the corner contributes $\left(-1\right)^{\operatorname{deg}(E_{k}(\lambda))-1}\cdot\left(-1\right)^{\operatorname{deg}(E_{k}(\lambda))-1} = +1$.
\end{proof}

\begin{lemma}
	The upper right corner of $\mathbb{E}_k$ is always $1$.
\end{lemma}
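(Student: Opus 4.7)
The plan is strong induction on $k$, where the real content is a single cofactor expansion that \emph{forces} the corner value to be $+1$. The base cases $k = 2, 3, 4$ are visible by inspection of the displayed matrices, so all of the work lies in the inductive step.

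Assume that for every $2 \leq j \leq k$, the upper right corner of $\mathbb{E}_j$ is $+1$ and the subdiagonal of $\mathbb{E}_j$ consists entirely of $-1$'s (both read off directly from the recursive construction). Let $c$ denote the value in the upper right corner of $\mathbb{E}_{k+1}$; I want to show $c=1$ is forced by $\det(\lambda\mathbf{I}-\mathbb{E}_{k+1})=E_{k+1}(\lambda)$. The first row of $\widetilde{\mathbb{E}}_k$ is zero (its $(1,1)$ entry is $0$ and the remaining entries of row $1$ are zero because $\widetilde{\mathbb{E}}_k$ is block lower triangular), so the first row of $\lambda\mathbf{I}-\mathbb{E}_{k+1}$ has only two nonzero entries: a $\lambda$ in column $1$ and a $-c$ in column $2^k$. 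Expand along this row.

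The $(1,1)$ minor is the determinant of a block lower triangular matrix whose diagonal blocks are $\lambda\mathbf{I}-\mathbb{E}_1$, $\lambda\mathbf{I}-\mathbb{E}_2,\ldots, \lambda\mathbf{I}-\mathbb{E}_{k-1}, \lambda\mathbf{I}-\mathbb{E}_k$, hence equals $E_1(\lambda)E_2(\lambda)\cdots E_k(\lambda)$. For the $(1,2^k)$ minor, I remove row $1$ and column $2^k$ of an upper Hessenberg matrix; under this deletion the original subdiagonal shifts onto the diagonal of the new $(2^k-1)\times(2^k-1)$ matrix, and all entries strictly below the new diagonal vanish, so the resulting matrix is upper triangular. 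By the inductive hypothesis on subdiagonals, each of those new diagonal entries equals $+1$, so this minor is $+1$. Combining with the cofactor signs (and using $2^k$ even for $k\geq 1$), the expansion yields
\begin{equation*}
\det(\lambda\mathbf{I}-\mathbb{E}_{k+1}) \;=\; \lambda\,E_1(\lambda)E_2(\lambda)\cdots E_k(\lambda) \;+\; c,
\end{equation*}
and comparing with $E_{k+1}(\lambda)=\lambda E_1(\lambda)\cdots E_k(\lambda)+1$ forces $c=1$.

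The only step requiring any care is the minor $M_{1,2^k}$: one needs to track that deleting row $1$ and column $2^k$ of an irreducibly upper Hessenberg matrix produces a strictly upper triangular matrix whose diagonal is the old subdiagonal. This is essentially bookkeeping, but it must be paired with the parallel observation (which I would fold into the induction as a companion claim, or state as a short preliminary remark) that every subdiagonal entry of $\mathbb{E}_j$ is $-1$; both facts are immediate from the block recursion but must be named explicitly for the argument to close.
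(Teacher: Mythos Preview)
Your proof is correct and follows essentially the same approach as the paper's: a Laplace expansion along the first row of $\lambda\mathbf{I}-\mathbb{E}_{k+1}$, where the corner minor reduces to the product of the subdiagonal entries and the cofactor sign is governed by the parity of the matrix dimension. The paper's version is terser---it cites an external theorem for the general formula that the corner equals $(-1)^{\deg E_k}$ and then invokes $\deg E_k = 2^{k-1}$ even---whereas you carry out the expansion explicitly and fold the ``subdiagonal is all $-1$'s'' fact into the induction; both routes arrive at $c=+1$ by the same mechanism.
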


\begin{proof}
	As mentioned in Theorem 4 from \cite{chan2017constructing}, the element in the upper right corner is dependent on the degree of the polynomial, in this case $(-1)^{\operatorname{deg}E_k}$ for $\mathbb{E}_k$. Since the degree of the Euclid polynomials is
	\begin{align}
		\operatorname{deg} E_k &= 1 + \operatorname{deg} \left(E_{k-1}\right) - 1 + \operatorname{deg}E_{k-1} \nonumber \\
		&= 2\operatorname{deg}E_{k}
	\end{align}
	and $\operatorname{deg}E_1 = 1$; therefore,
	\begin{equation}
		\operatorname{deg}E_k = 2^{k-1} \>,
	\end{equation}
	which means that $\operatorname{deg}E_k$ is always even, and thus, the upper right corner of $\mathbb{E}_k$ is always $1$. We get $(-1)^{\operatorname{deg}e_n-1}$ from Laplace expansion and $(-1)^{\operatorname{deg}E_k-1}$ from minor and therefore,
	\begin{equation*}
		\left((-1)^{\operatorname{deg}E_k-1}\right)^2 = +1 \>.
	\end{equation*}
\end{proof}

\begin{remark}
	These ``Bohemian'' matrices\footnote{A matrix family is \textsl{Bohemian} if its entries come from a single discrete (and hence bounded) set. The name comes from ``\underline{Bo}unded \underline{He}ight \underline{M}atrix of Integers."} contain only entries that are $-1$, $0$, or $1$: the bound on that height of the entries is just $\left| m_{ij}\right| \leq 1$. But the coefficients of the Euclid polynomials $E_k(\lambda)$ are decidedly \textsl{not} bounded. This is just like the Mandelbrot polynomials, whose (polynomial coefficient) height grows exponentially with their degree $d_n = 2^{n-1} - 1$, and \textsl{doubly} exponentially with $n$. The eigenvalue problems we have found are considerably easier to solve than the monomial basis polynomials are!
\end{remark}

\begin{remark}
	There are many choices here---these companion matrices are in no way unique. For instance, we could use any of
	\begin{equation}
		\left[ 
		\begin{array}{cc}
			0 & \phantom{-}1 \\
			-1 & -1
		\end{array}
		\right], \quad
		\left[ 
		\begin{array}{cc}
			0 & -1 \\
			1 & -1
		\end{array}
		\right], \quad
		\left[ 
		\begin{array}{cc}
			-1 & -1 \\
			\phantom{-}1 & \phantom{-}0
		\end{array}
		\right], \quad
		\left[ 
		\begin{array}{cc}
			-1 & 1 \\
			-1 & 0
		\end{array}
		\right]
	\end{equation}
	for $\mathbb{E}_2$; and we may arrange the blocks for $\lambda$ (i.e. $[0]$), $E_1$, $E_2$, $\ldots$, $E_{k-1}$ in any order; at this time we do not know which order is best numerically, if any.
\end{remark}

\begin{figure}[t]
	\centering
	\includegraphics[width = 0.75\textwidth]{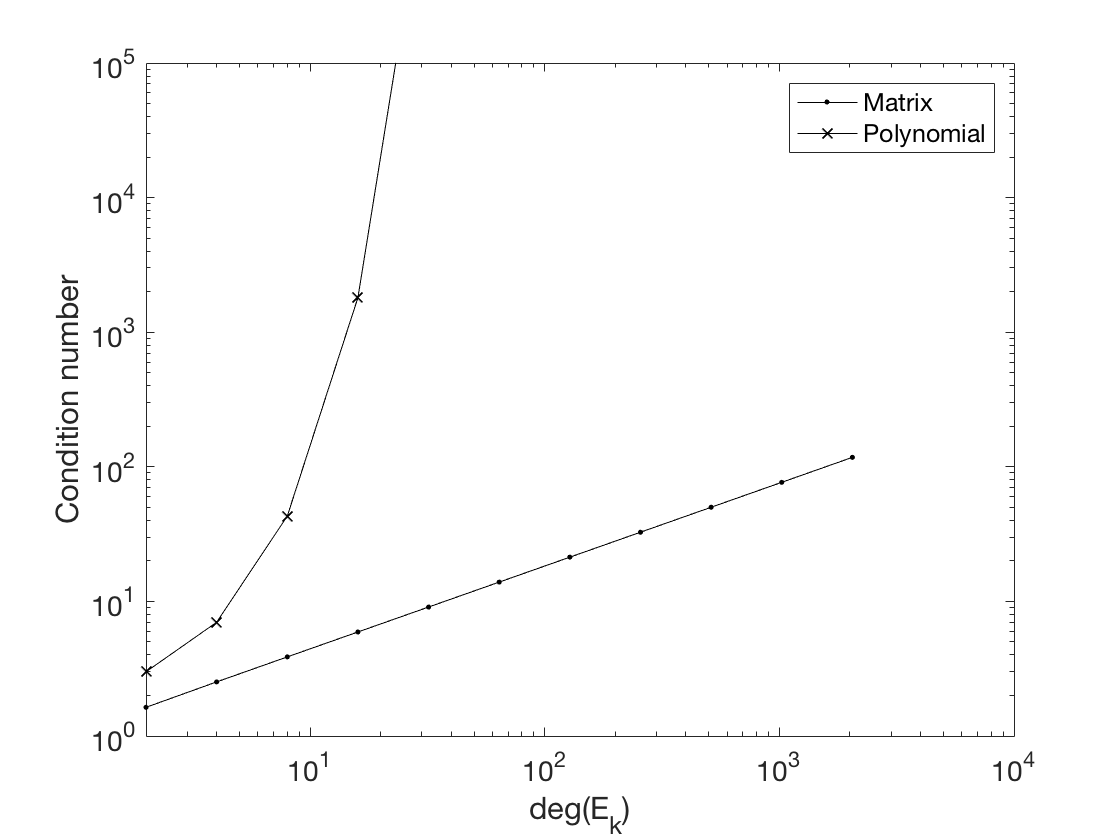}
	\caption{Log-log plot of condition numbers for the Euclid polynomials and their companions from $k = 2$ to $k = 12$. The computed slope for the condition number for the matrices is 0.618 giving an estimated condition number growth as $K_e\sim d^{0.618}$ which is better than the expected $\mathcal{O}(d^2)$ behaviour~\cite{beltran2017polynomial}. The curious three digit coincidence with $\sfrac{(\sqrt{5} - 1)}{2}$ is noted. The doubly exponential growth of the polynomial conditioning appears as exponential growth in this log log plot.}
	\label{fig:condition_number}
\end{figure}


\section{Conditioning of the eigenvalues of $\mathbb{E}_k$}\label{sec:condition_number}
Since the eigenvalues are all simple, $\mathbb{E}_{k}$ is diagonalizable and the condition number of each eigenvalue can be expressed using its unit left eigenvector $y^H$ and unit right eigenvector $x$ with $y^{H}\mathbb{E}_k = \lambda y^{H}$ and $\mathbb{E}_{k}x = \lambda x$, $\|x\| = \|y^{H}\| = 1$ and the condition number is 
\begin{equation}
	K_{e} = \sfrac{1}{(y^{H}x)}\>.
\end{equation} 
We expect from our experience with random matrices that $K_{e} = \mathcal{O}(d^{2})$ where $d$ is the dimension of the matrix, here the degree of the polynomial.

We can also look at the pseudospectra of the matrices that is, the eigenvalues of perturbed matrices~\cite{corless2013graduate}. Given an $\varepsilon > 0$, a pseudospectrum $\Lambda_{\varepsilon}(\mathbb{E}_{6})$ is defined by
\begin{equation}
	\Lambda_{\varepsilon}(\mathbb{E}_{6}) = \left\{ z \mathrel{\bigg|} \|(z\mathbf{I} - \mathbb{E}_{6})^{-1}\|_{2} \geq \dfrac{1}{\varepsilon}\right\} \iff \left\{ z \mathrel{\bigg|} \sigma_{\operatorname{deg}(E_{6})}\left(z\mathbf{I} - \mathbb{E}_{6}\right) \leq \varepsilon \right\} \>.
\end{equation}
Here $\sigma_{\operatorname{deg}_{E_{6}}}$ is the smallest singular value of $z\mathbf{I} - \mathbb{E}_{6}$. The contour plot can then be created using
\begin{equation}
	f(z) = \sigma_{\operatorname{deg}(E_{6})}\left(z\mathbf{I} - \mathbb{E}_{6}\right) > 0 \>.
\end{equation}
Figure \ref{subfig:pseudospectra} shows the pseudospectra of $\mathbb{E}_{6}$ for ten logarithmically-spaced values of $\varepsilon$ between $10^{-2}$ and $10^{-1}$.

To compare the conditioning of our companion matrices to the polynomials, we can also look at the pseudozeros of the polynomials. This allows us to look at the relationship between the condition number for the evaluation of polynomials and the condition number for rootfinding for polynomials~\cite{corless2013graduate}. The pseudozeros are defined as
\begin{equation}
	\Lambda_{\varepsilon}\left(E_{6}(\lambda)\right) = \left\{ \lambda \mathrel{\bigg|} \left| E_{6}(\lambda) \right| \leq \varepsilon \cdot B_{6}(\lambda)\right\} \>,
\end{equation}
where $B_{6}(\lambda) = E_{6}(\left| \lambda \right| )$. Figure \ref{subfig:pseudozeros} is a contour plot of $\sfrac{\left|E_{6}(\lambda)\right|}{E_{6}(\left| \lambda\right|)}$ between 10 logarithmically-spaced values between $10^{-5}$ and $10^{-4}$. 

\begin{figure}
	\parbox[b]{.5\linewidth}{
		\includegraphics[width=0.475\textwidth]{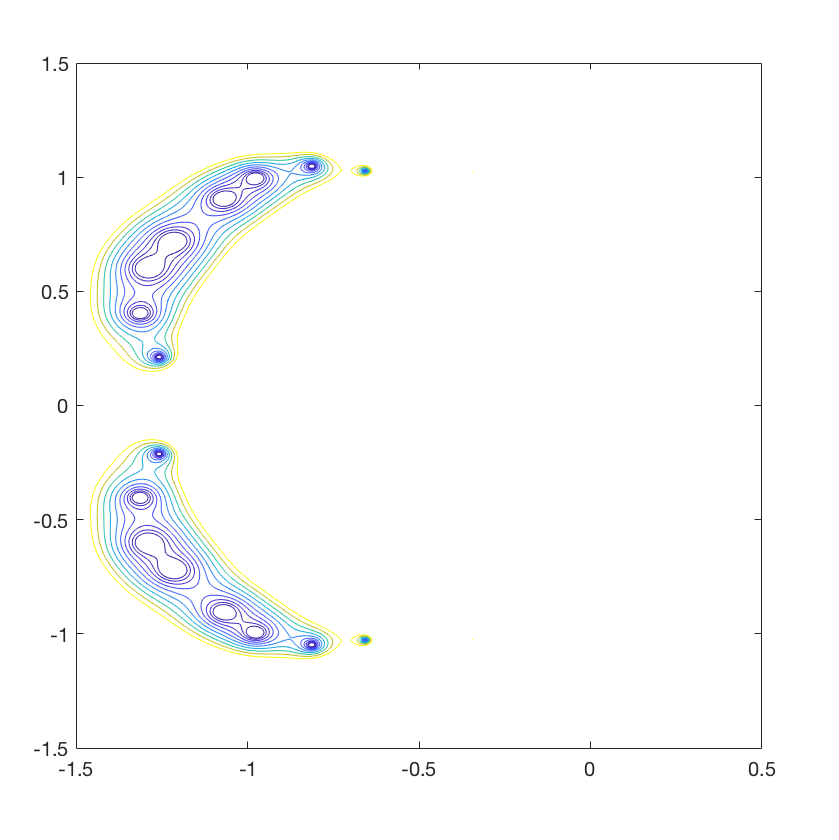}
	}%
	\hspace{.1\linewidth}%
	\parbox[b]{.4\linewidth}{%
		\subcaption{Pseudozeros of $E_{6}(\lambda)$ for 10 logarithmically-spaced values of $\varepsilon$ between $10^{-9.5}$ and $10^{-8.5}$. This is quite ill-conditioned. We only change $E_{6}(\lambda)$ by $3\times10^{-6}\%$ at most.\vspace{8em}}
		\label{subfig:pseudozeros}
	}
	\parbox[b]{.5\linewidth}{
		\includegraphics[width=0.475\textwidth]{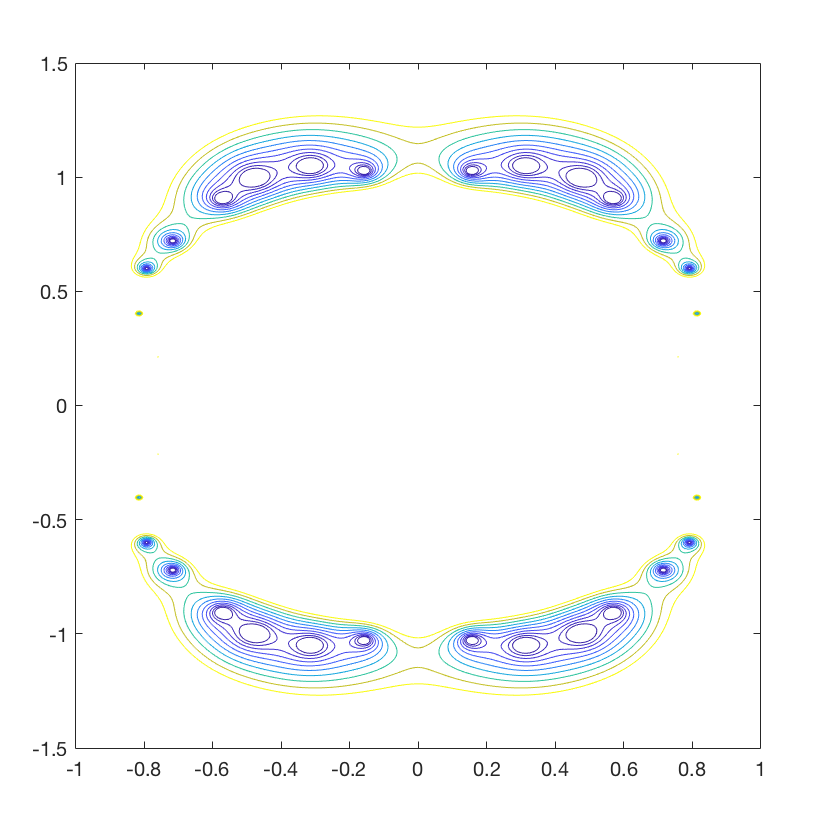}
	}%
	\hspace{.1\linewidth}%
	\parbox[b]{.4\linewidth}{%
		\subcaption{Pseudozeros of $E_{6}(u)$ for 10 logarithmically-spaced values of $\varepsilon$ between $10^{-3}$ and $10^{-2}$. This is substantially better-conditioned (and more symmetric) than the monomial basis (Figure \ref{subfig:pseudozeros}) changing $E_{k}(\lambda)$ by $1\%$ at most. \vspace{7em}}
		\label{subfig:pseudozeros_shift}
	}
	\parbox[b]{.5\linewidth}{
		\includegraphics[width=0.475\textwidth]{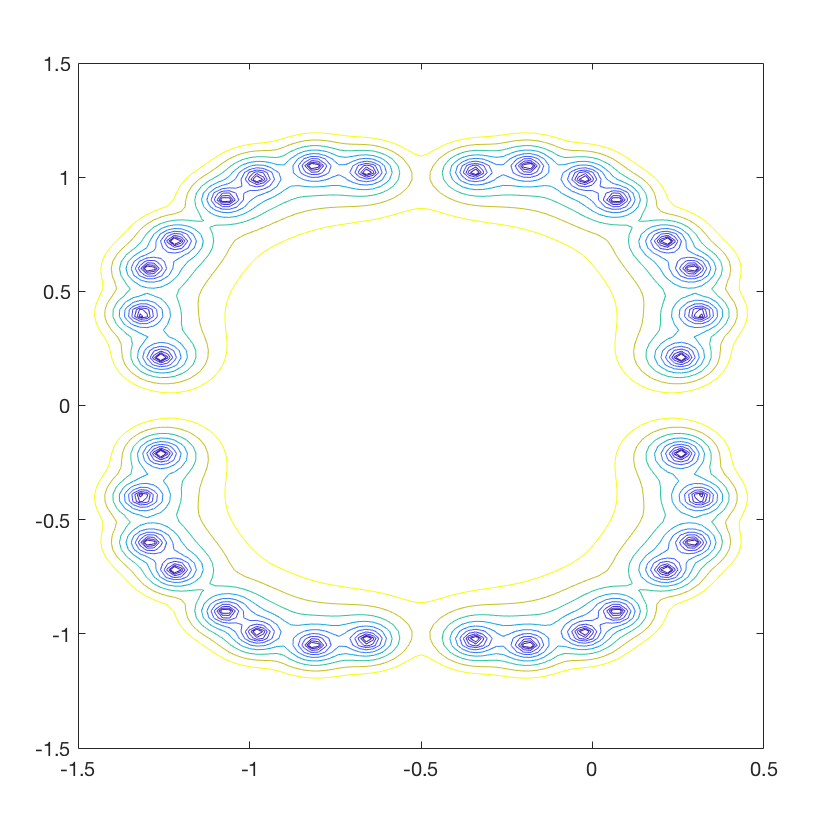}
	}%
	\hspace{.1\linewidth}%
	\parbox[b]{.4\linewidth}{%
		\subcaption{Pseudospectra of $\mathbb{E}_{6}$ for 10 logarithmically-spaced values of $\varepsilon$ between $10^{-2}$ and $10^{-1}$. This is the best-conditioned of the representations. This figure shows the results of changing $\mathbb{E}_{6}$ by 1--10\%.\vspace{8em}}
		\label{subfig:pseudospectra}
	}
\caption{The the similar spacings between Figures \ref{subfig:pseudozeros}, \ref{subfig:pseudozeros_shift} and \ref{subfig:pseudospectra} demonstrate the superior conditioning of the companion matrix, owing to its minimal height.}
\label{fig:pseudo}
\end{figure}

We can see from these figures that the roots computed from the companion matrix are well-conditioned. That the spacing are similar in the two figures, when $\varepsilon$ is so much smaller in Figure \ref{subfig:pseudozeros} demonstrates unequivocally that the eigenvalue problem is much better conditioned (a factor about $10^{3}$). This factor grows exponentially, as shown in Figure \ref{fig:condition_number}. We consider that these figures are ``similar" if
\begin{itemize}
	\item there are circles around individual roots/eigenvalues,
	\item there are some regions surrounding merged roots/eigenvalues,
	\item spacing between contours in about $1\%$ of the figure diameter.
\end{itemize}

\section{Do we have to use matrices?}
Expanding about $\lambda = -\sfrac{1}{2}$ is clearly better than expanding about $\lambda = 0$. Put $u = \lambda + \sfrac{1}{2}$, and then
\begin{align}
	E_{1}(\lambda) &= \lambda + 1 = u + \dfrac{1}{2} = E_{1}(u) \nonumber \\
	E_{2}(u) &= u^{2} + \dfrac{3}{4} \nonumber \\
	E_{3}(u) &= u^{4} + \dfrac{1}{2}u^{2} + \dfrac{13}{16} \nonumber \\
	E_{4}(u) &= u^{8} + u^{6} + \dfrac{7}{8}u^{4} + \dfrac{5}{16}u^{2} + \dfrac{217}{256}
\end{align}
and these polynomials only have even powers (after $k = 1$); this makes the polynomials subject to only half as much rounding error because zero coefficients cannot (are not allowed to) be perturbed. More, the coefficients of the even order terms appears to grow more slowly.

However, they do still grow doubly exponentially with $k$ (exponentially with the degree). The first polynomial to have a coefficient larger than 1 in magnitude is $E_{5}(u) = u^{16} + 2u^{14} + \cdots + \sfrac{57073}{65536}$ and thereafter the repeated squaring gives runaway growth. We present the graphs of the condition numbers
\begin{equation}
	\widetilde{B}_{k}(u) = \sum_{j=0}^{\operatorname{deg}E_{k}}\left|v_{j}\right|\left|u\right|^{k}
\end{equation}
on $0 \leq u \leq 1.1180$, a circle that contains the roots, in Figure \ref{fig:B_condition_number}. We see that for inside the interior of the cauliflower, this representation is well-conditioned (though uninteresting---nothing much is happening there) but near the boundary the exponential growth takes over.

We are forced to conclude that the minimal height companion matrices are exponentially better than these polynomials too.

Implicit in our discussion is the observation that the minimal height companion matrix is even more advantageous for larger $k$. The condition number of $E_{k}(\lambda)$ grows like $E^{2^{k}}$; the condition number of $E_{k}(u)$ grows like $E^{2^{k-1}}$ (possibly for a different $E$); while the condition number of $\mathbb{E}_{k}$'s eigenvalues grow only, as in Figure \ref{fig:condition_number}, like $\left(2^{k-1}\right)^{0.618}$. In practice, the pseudozeros/pseudospectra widths are already supporting this at $k = 6, 7, 8$, shown in Table \ref{tab:pseudo}.

\begin{table}[h]
	\centering
	\begin{tabular}{c|ccc}
		$k$ & $E_{k}(\lambda)$ & $E_{k}(u)$ & $\mathbb{E}_{k}$ \\
		\hline
		$6$ & $10^{-9.5} \ldots 10^{-8.5}$ & $10^{-3\phantom{0}} \ldots 10^{-2\phantom{0}}$ & $10^{-2} \ldots 10^{-1}$ \\
		$7$ & $10^{-19.5} \ldots 10^{-18.5}$ & $10^{-6\phantom{0}} \ldots 10^{-5\phantom{0}}$ & $10^{-2} \ldots 10^{-1}$ \\
		$8$ & $10^{-38.5} \ldots 10^{-37.5}$ & $10^{-12} \ldots 10^{-11}$ & $10^{-2} \ldots 10^{-1}$
	\end{tabular}
	\caption{Pseudozeros/pseudospectra of $E_{k}(\lambda)$, $E_{k}(u)$ and $\mathbb{E}_{k}$ for $k = 6, 7, 8$. For these $\varepsilon$ ranges, the pictures are similar to those of Figure \ref{fig:pseudo}. These pictures are available upon request.}
	\label{tab:pseudo}
\end{table}

\begin{remark}
	Using just the recurrence, not the polynomials, might be superior even to matrices.
\end{remark}

\begin{figure}[t]
	\centering
	\includegraphics[width=0.6\textwidth]{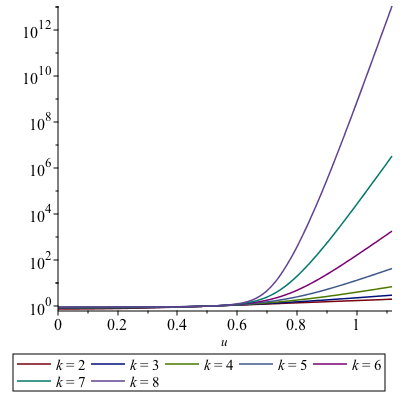}
	\caption{Condition numbers $\widetilde{B}_{k}(u)$ on $0 \leq u \leq 1.1180$ for $k = 2$ to $8$.}
	\label{fig:B_condition_number}
\end{figure}

\begin{figure}[t]
	\centering
	\includegraphics[width = 0.75\textwidth]{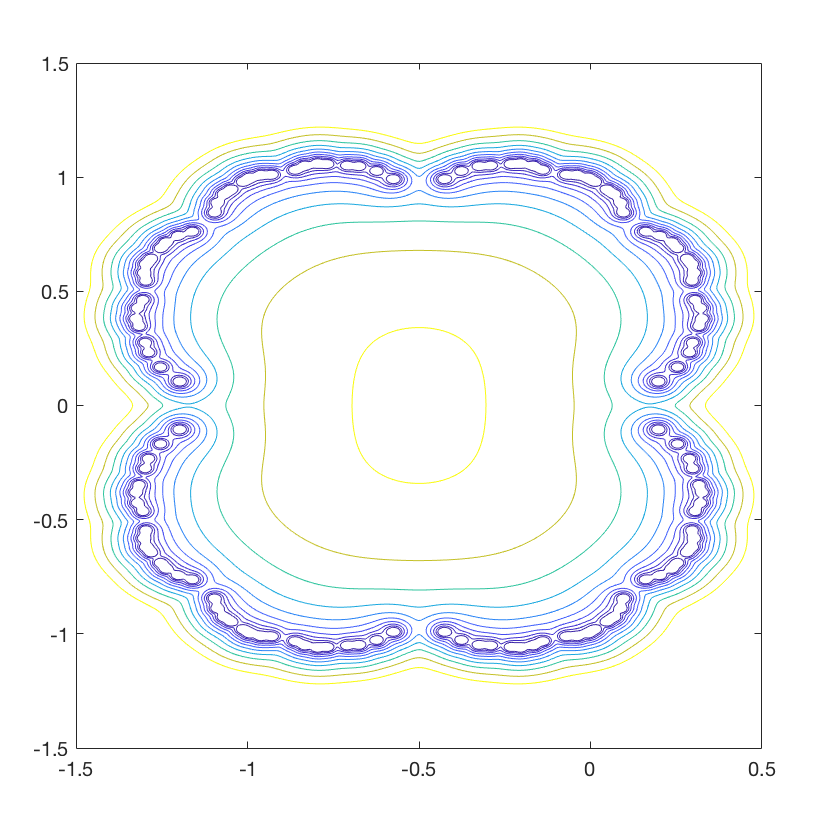}
	\caption{Pseudospectra of $\mathbb{E}_{8}$ for 10 logarithmically-spaced values of $\varepsilon$ between $10^{-2}$ and $10^{-1}$.}
	\label{fig:pseudospectra_8}
\end{figure}

\section{Concluding Remarks}
For us, the Euclid polynomials showed that the construction of companion matrices by the method of Piers Lawrence was, in fact, general. This construction also gives a \textsl{minimal height} companion matrix (over the integers); trivially so, because $\operatorname{height}(E_k) = 1$. This implies superior conditioning: already at $k = 6$, the matrix $\mathbb{E}_{6}$ has eigencondition about 1 while the polynomial $E_{6}(\lambda)$ had $B(\lambda) \sim 10^{4}$. But the other facts presented here show that the $E_k(\lambda)$ are themselves of interest: in particular, we're not done with the identity (for $\lambda > 0$)
\begin{equation}
	\dfrac{1}{\lambda} = \sum_{k \geq 1}\dfrac{1}{E_{k}(\lambda)} \>.
\end{equation}

\bibliographystyle{plain}
\bibliography{reference}

\subsection*{Acknowledgment}
We thank Donald E.~Knuth for his interest, suggestions, and improved proof. We also thank J.~Rafael Sendra, Juana Senda, and Laureano Gonzalez-Vega for their input and suggestions at the early stages of preparation of this paper. We also thank Dr.~Susan Colley for her feedback. This work was supported by the Natural Sciences and Engineering Research Council of Canada and an Ontario Graduate Scholarship.
\end{document}